\newtheorem{theorem}{Theorem}[section]
\newtheorem{lemma}[theorem]{Lemma}
\newtheorem{proposition}[theorem]{Proposition}
\newtheorem{corollary}[theorem]{Corollary} 
\theoremstyle{definition}  
\newtheorem{definition}[theorem]{Definition}
\newtheorem{example}[theorem]{Example}
\newtheorem{conjecture}[theorem]{Conjecture}  
\newtheorem{remark}[theorem]{Remark}
\newcommand{\id}{\text{id}}
\newcommand{\Fun}{\text{Fun}}
\renewcommand{\Vec}{\operatorname{\operatorname{\mathsf{Vec}}}}
\DeclareMathOperator{\Pic}{\operatorname{\mathsf{Pic}}}
\DeclareMathOperator{\BrPic}{\operatorname{\mathsf{BrPic}}}
\DeclareMathOperator{\Aut}{\operatorname{\mathsf{Aut}}}
\DeclareMathOperator{\Out}{\operatorname{\mathsf{Out}}}
\DeclareMathOperator{\Inn}{\operatorname{\mathsf{Inn}}}
\DeclareMathOperator{\St}{\operatorname{\mathsf{Stab}}}
\DeclareMathOperator{\Rep}{\operatorname{\mathsf{Rep}}}
\DeclareMathOperator{\Hom}{\operatorname{\mathsf{Hom}}}
\DeclareMathOperator{\Sym}{\operatorname{\mathsf{Sym}}}
\DeclareMathOperator{\Aa}{\operatorname{\mathfrak{A}}}
\newcommand{\op}{\text{op}}
\newcommand{\B}{\mathcal{B}}
\newcommand{\C}{\mathcal{C}}
\newcommand{\D}{\mathcal{D}}
\newcommand{\Z}{\mathcal{Z}}
\renewcommand{\L}{\mathcal{L}}
\newcommand{\M}{\mathcal{M}}
\newcommand{\A}{\mathcal{A}}
\newcommand{\N}{\mathcal{N}}
\newcommand{\be}{\mathbf{1}}
\renewcommand{\be}{\mathbf{1}}
\newcommand{\bt}{\boxtimes}
\newcommand{\ot}{\otimes}
\newcommand{\beq}{\begin{equation}}
\newcommand{\eeq}{\end{equation}}
\newcommand{\bpf}{\begin{proof}}
\newcommand{\epf}{\end{proof}}
\newcommand{\bth}{\begin{theorem}}
\renewcommand{\eth}{\end{theorem}}
\newcommand{\bpr}{\begin{proposition}}
\newcommand{\epr}{\end{proposition}}
\newcommand{\ble}{\begin{lemma}}
\newcommand{\ele}{\end{lemma}}
\newcommand{\bco}{\begin{corollary}}
\newcommand{\eco}{\end{corollary}}
\newcommand{\bde}{\begin{definition}}
\newcommand{\ede}{\end{definition}}
\newcommand{\bex}{\begin{example}}
\newcommand{\eex}{\end{example}}
\newcommand{\bre}{\begin{remark}}
\newcommand{\ere}{\end{remark}}
\newcommand{\bcj}{\begin{conjecture}}
\newcommand{\ecj}{\end{conjecture}}
\newcommand{\GrpV}{\mathbb{Z}/2\mathbb{Z}\times \mathbb{Z}/2\mathbb{Z}}
\newcommand{\GrpCtwo}{\mathbb{Z}/2\mathbb{Z}}
\newcommand{\GrpCthree}{\mathbb{Z}/3\mathbb{Z}}
\begin{document}
\title[Brauer-Picard groups of pointed fusion categories]{Categorical Lagrangian Grassmannians and Brauer-Picard groups of pointed fusion categories}
\author{Dmitri Nikshych}
\address{D.N.: Department of Mathematics and Statistics,
University of New Hampshire,  Durham, NH 03824, USA}
\email{nikshych@math.unh.edu}
\author{Brianna Riepel}
\address{B.R.: Department of Mathematics and Statistics,
University of New Hampshire,  Durham, NH 03824, USA}
\email{briepel@wildcats.unh.edu}

\begin{abstract}

We analyze the action of the Brauer-Picard group of a pointed fusion category
on the set of Lagrangian subcategories of its center. Using this action we compute
the Brauer-Picard groups of pointed fusion categories associated to several classical 
 finite groups. As an application, we construct new examples of weakly group-theoretical
fusion categories.
\end{abstract}

\date{\today}

\maketitle

\setcounter{tocdepth}{1}
\tableofcontents
\newpage

\section{Introduction}


Let $\A$ be a fusion  category.
The  {\em Brauer-Picard} group $\BrPic(\A)$  of $\A$ consists of equivalence classes of invertible $\A$-bimodule 
categories  (see \cite{ENO2}).   Brauer-Picard  groups play an important role in the theory of fusion  categories.
In particular, they are used in the classification of graded extensions of fusion categories \cite{ENO2}.
In the case when $\A$ is the category of representations of a Hopf algebra the group  $\BrPic(\A)$   is known as the
{\em strong Brauer group}, see \cite{COZ}. 

Computing Brauer-Picard groups for concrete examples  of fusion categories is an important task.  
A number of special results of this type was obtained  in the literature, see, e.g., \cite{CC, GS, Mo}. 
In this paper we develop techniques that allow to compute explicitly Brauer-Picard groups of pointed
(and, hence, group-theoretical) fusion categories.  We use the following characterization
of Brauer-Picard groups established in \cite{ENO2}.
For any fusion category $\A$ there is a canonical isomorphism:
\begin{equation}
\label{categorical Phi}
\Phi: \BrPic(\A) \to  \Aut^{br}(\Z(\A)),
\end{equation}
where $\Z(\A)$ is the {\em Drinfeld center} of $\A$ and $\Aut^{br}(\Z(\A))$ is the group of braided
autoequivalences of $\Z(\A)$. The latter group has a distinct geometric flavor (e.g., when $\A$ is
the representation category of a finite abelian group $A$, the group  $\Aut^{br}(\Z(\A))$
is  the split orthogonal group
$O(A\oplus\widehat{A})$).  This suggests the use of   ``categorical-geometric" methods
for computation of $\Aut^{br}(\Z(\A))$ (which is identified with $\BrPic(\A)$ via 
isomorphism \eqref{categorical Phi}).

In this paper we analyze
the action of  $\Aut^{br}(\Z(\A))$, where $\A=\Vec_G$ is the category of vector spaces graded by a finite group $G$,  
on the categorical Lagrangian 
Grassmannian $\mathbb{L}(G)$ associated to it.  By definition, the latter is the set of Lagrangian subcategories of $\Z(\A)$. 
The set $\mathbb{L}(G)$ was described in group-theoretical terms in \cite{NN}.
We  determine the point stabilizers for this action and explicitly compute the corresponding permutation groups
in a number of concrete examples.  Note that Mombelli in \cite{Mo} studied the group $\BrPic(\Vec_G)$
using methods different from ours. 

Module categories over a  braided fusion category  $\C$ can be regarded as $\C$-bimodule categories. 
In this case the group $\BrPic(\C)$ contains a subgroup $\Pic(\C)$, called the {\em Picard group} of $\C$,
consisting of invertible $\C$-module categories \cite{ENO2}. This group  is isomorphic
to the group of Morita equivalence classes of Azumaya algebras in $\C$ (the latter group was introduced in
\cite{OZ}). 

One defines a homomorphism
\begin{equation}
\label{categorical partial}
\partial: \Pic(\C) \to  \Aut^{br}(\C),
\end{equation}
in a way parallel to \eqref{categorical Phi}.  
It was shown in \cite{ENO2} that \eqref{categorical partial} is an isomorphism for every non-degenerate 
braided fusion category~$\C$. One has $\Pic(\Z(\A))\cong \BrPic(\A)$ for any fusion category $\A$.


The paper is organized as follows. 

In Section~\ref{cohomology section} we collect results about finite group cohomology
that will be used for computations. Section~\ref{section: Fusion categories and their Brauer-Picard groups} 
contains definitions and  basic facts about fusion categories and their Brauer-Picard groups.

In Section~\ref{section: parameterization of BrPic} we present a useful parameterization of the group 
$\BrPic(\Vec_G)$ previously obtained by Davydov in \cite{Da2}.  This parameterization allows one easily 
recognize involutions in $\BrPic(\Vec_G)$ (see Corollary~\ref{order 2}). 

In Section~\ref{section: Braided autoequivalences of centers}  we describe, following \cite{ENO2},  
the construction of isomorphism   \eqref{categorical Phi} between the
Brauer-Picard group of a fusion category and the group of braided autoequivalences of its center.  This allows us
to concentrate on the computation of the latter group.  For a braided fusion category $\C$ we find
the subgroup of $\Aut^{br}(\Z(\C))$ stabilizing the subcategory $\C \subset \Z(\C)$ 
(see Proposition~\ref{stabilizer of C} and Corollary~\ref{found StRepG}).

The action of $\Aut^{br}(\Z(\Vec_G))$ on the Lagrangian Grassmannian $\mathbb{L}(G)$
(which is, by definition, the set of Lagrangian subcategories of $\Z(\Vec_G)$)
is studied in Section~\ref{action on Grassmannian}.  In general, this action is not transitive. 
We show in Proposition~\ref{transitive action} that the orbit of this action containing the canonical 
subcategory  $\Rep(G) \subset \Z(\Vec_G)$ is precisely the set $\mathbb{L}_0(G)$ 
of subcategories of $ \Z(\Vec_G)$ braided equivalent to $\Rep(G)$. 

Sections~\ref{examples: symmetric} through \ref{examples: dihedral} 
illustrate our techniques. They contain explicit computations
of groups $\Aut^{br}(\Z(\Vec_G))$ for several classes of finite groups $G$.  The common feature
of these examples is that in each case it is possible to describe the set $\mathbb{L}_0(G)$ and the 
corresponding action of  $\Aut^{br}(\Z(\Vec_G))$.  Combining information about this action
with previously developed machinery we  determine groups $\Aut^{br}(\Z(\Vec_G))$. 
As a byproduct, we obtain new examples of  non-integral weakly group-theoretical fusion categories. 

\textbf{Acknowledgments.}
We are grateful to Juan Cuadra, Alexei Davydov, Pavel Etingof, Deepak Naidu, and  Victor Ostrik
for helpful discussions and valuable comments.
The work of the first  named author  was partially supported  by the NSA grant H98230-13-1-0236
and NSF grant DMS-0800545.

\section{Conventions and  notation}

Throughout this paper we work over an algebraically closed field  $k$ of characteristic $0$. 
All categories considered in this paper are finite, abelian, semisimple, and $k$-linear.
All functors are additive and $k$-linear.
We freely use the language and basic results of the theory of fusion categories
and module categories over them \cite{ENO1, ENO2, DGNO}.
We will denote $\Vec$ the fusion category of $k$-vector spaces. 

For a finite group $G$ we denote $\Aut(G)$ the group of automorphisms of $G$ and by $\Out(G)$
the group of (congruence classes of) outer automorphisms of $G$.  For a $G$-module $A$
we denote by $Z^n(G,\,A)$ the group of $n$-cocycles on $G$ with values in $A$ and by $H^n(G,\,A)$
the corresponding $n$th cohomology group. We will often identify cohomology classes
with cocycles representing them. 

For any (not necessarily Abelian) group $G$ we denote $\widehat{G} = \Hom(G,\, k^\times)$ 
the group of linear characters of $G$. 

We can view  the multiplicative group $k^\times$ as a $G$-module with the trivial action.
There is an obvious  action of $\Aut(G)$ on  $H^n(G,\,k^\times)$. This action factors through
the subgroup of inner automorphisms and, hence, gives rise to an action of $\Out(G)$.
For a  subgroup $L \subset G$, a cocycle   $f\in Z^n(L,\,k^\times)$, and an automorphism
$\theta\in \Aut(G)$ denote
\begin{equation}
 f^{\theta}=f \circ (\theta^{-1} \times \cdots \times \theta^{-1})  \in Z^n(\theta(L),\,k^\times).
\end{equation}
It is clear that the cohomology class of $f^\theta$ in $H^n(L,\,k^\times)$ is well defined.
When $\theta$ is the inner automorphism $x \mapsto gxg^{-1},\,x\in G$, we denote
$f^\theta$ by $f^g$.

For any positive integer $n$ we denote $D_{2n}\cong \mathbb{Z}/n\mathbb{Z} \rtimes \mathbb{Z}/2\mathbb{Z}$ 
the dihedral  group of order $2n$, $S_n$ the symmetric group of degree $n$, and $A_n$
the alternating group of degree $n$. More generally, for any set $\Omega$ we denote $\Sym(\Omega)$ the symmetric
group of~$\Omega$. 

Finally, for a finite group $G$ we denote by $\Vec_G$  the fusion category of finite-dimensional 
$G$-graded vector spaces and by $\Rep(G)$
the symmetric fusion category of finite-dimensional representations of $G$. 

\section{Some facts about cohomology of finite groups}
\label{cohomology section}

Let $G$ be a finite group.
\begin{remark}
\label{H1(G,A)}
Let $A$ be a $G$-module. It is well known  that $H^1(G,\,A)$ classifies
homomorphisms $G\to A\rtimes G$ which are right inverse to the standard projection $A\rtimes G\to G$,
up to a conjugation by elements of  $A$.
\end{remark}

\begin{proposition}
\label{cohomofprimeordergroups}
 Let $G$ be a finite group and let $A$ be a finite $G$-module  such that the orders $|G|$ and $|A|$ are relatively
 prime. Then $H^n(G,\,A)=0$ for all $n$.
\end{proposition}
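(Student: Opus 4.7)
The plan is to combine two standard annihilation statements for group cohomology in degrees $n\ge 1$.

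First, I would recall the restriction-corestriction identity: for any subgroup $H\subset G$ and any $G$-module $A$, the composition
\[
H^n(G,A)\xrightarrow{\operatorname{res}} H^n(H,A)\xrightarrow{\operatorname{cor}} H^n(G,A)
\]
is multiplication by the index $[G:H]$. Taking $H=\{1\}$ and using that $H^n(\{1\},A)=0$ for $n\ge 1$, this shows multiplication by $|G|$ annihilates $H^n(G,A)$ for every $n\ge 1$.

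Second, since $A$ is a finite abelian group, Lagrange's theorem applied to the underlying abelian group gives $|A|\cdot a=0$ for every $a\in A$. The map $A\to A$, $a\mapsto |A|a$, is therefore the zero endomorphism of $A$ as a $G$-module, and functoriality of $H^n(G,-)$ implies that multiplication by $|A|$ annihilates $H^n(G,A)$ for all $n$.

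Finally, since $\gcd(|G|,|A|)=1$, Bezout's identity gives integers $r,s$ with $r|G|+s|A|=1$. For $n\ge 1$, the identity endomorphism of $H^n(G,A)$ is then $r$ times the $|G|$-multiplication plus $s$ times the $|A|$-multiplication, both of which are zero; hence $H^n(G,A)=0$.

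There is no real obstacle here; the only mild subtlety is the convention in the statement — the argument as sketched gives vanishing for $n\ge 1$, and for $n=0$ one interprets $H^0(G,A)=A^G$, which is itself annihilated by both $|G|$ (via the norm/averaging description once one divides by $|G|$, using coprimality) and by $|A|$, so the Bezout argument still forces $A^G=0$ under the coprimality hypothesis.
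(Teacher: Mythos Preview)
The paper states this proposition without proof (it is a standard textbook fact), so there is no argument to compare against. Your proof for $n\ge 1$ is the classical restriction--corestriction/Bezout argument and is correct.

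Your treatment of $n=0$, however, contains a genuine error. You claim that $A^G$ is annihilated by $|G|$ ``via the norm/averaging description once one divides by $|G|$'', but this does not hold: the norm map $N(a)=\sum_{g\in G} ga$ restricted to $A^G$ is multiplication by $|G|$, which says nothing about $|G|$ killing $A^G$. Under the coprimality hypothesis, multiplication by $|G|$ is actually an \emph{automorphism} of $A$ (hence of $A^G$), not the zero map, so the Bezout step cannot go through. Indeed the $n=0$ statement is simply false in ordinary cohomology: take $G$ acting trivially on any nonzero $A$ with $\gcd(|G|,|A|)=1$, e.g.\ $G=\mathbb{Z}/2\mathbb{Z}$ and $A=\mathbb{Z}/3\mathbb{Z}$; then $H^0(G,A)=A^G=A\neq 0$. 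The proposition should be read as asserting vanishing for $n\ge 1$ (which is all that is ever used in the paper), and your argument already establishes exactly that; just drop the final paragraph rather than try to rescue it.
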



The following result is taken from \cite{T} and \cite[Theorem 2.2.5]{K}. 
It can also be proved by means of the Hochschild-Serre spectral sequence.
\begin{theorem}
\label{5termexactseq}
Let $G= N \rtimes T$ and let  $\tilde{M}(G)\subset H^2(G,\,k^{\times})$ be the kernel of the restriction  homomorphism
$H^2(G,\,k^{\times})\rightarrow H^2(T,\,k^{\times})$. Then
\begin{equation}
\label{M(semidirect)}
H^2(G,\,k^{\times}) \cong H^2(T,\,k^{\times})\times \tilde{M}(G)
\end{equation}
and there is  an exact sequence
  \begin{equation}
  \label{sequence for semi direct}
  0\to  H^1(T,\, \widehat{N})\to \tilde{M}(G)\xrightarrow{res}  H^2(N,\,k^{\times})^{T}\to H^2(T,\, \widehat{N}),
  \end{equation}
where the homomorphism $\text{res}: \tilde{M}(G)\to H^2(N,\,k^{\times})^{T}$ is induced by the restriction 
$H^2(G,\,k^{\times})\to H^2(N,\,k^{\times})$.
\end{theorem}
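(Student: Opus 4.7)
The plan is to deduce both statements from the Lyndon--Hochschild--Serre spectral sequence
$E_2^{p,q}=H^p(T,\,H^q(N,\,k^\times))\Rightarrow H^{p+q}(G,\,k^\times)$
for the normal subgroup $N\triangleleft G$, combined with the collapsing features forced by the section $s\colon T\hookrightarrow G$.

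For the product decomposition \eqref{M(semidirect)}, I would observe that $s$ induces a retraction $s^{*}\colon H^n(G,\,k^\times)\to H^n(T,\,k^\times)$ of the inflation $H^n(T,\,k^\times)\to H^n(G,\,k^\times)$ in every degree. In degree $2$ this makes restriction to $T$ split surjective, so its kernel $\tilde{M}(G)$ is a direct complement to the image of inflation in $H^2(G,\,k^\times)$, yielding \eqref{M(semidirect)}.

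For the exact sequence \eqref{sequence for semi direct}, I would first identify the relevant $E_2$-terms, using $H^1(N,\,k^\times)=\widehat{N}$ and the trivial action on $k^\times$: namely $E_2^{0,2}=H^2(N,\,k^\times)^T$, $E_2^{1,1}=H^1(T,\,\widehat{N})$, $E_2^{2,1}=H^2(T,\,\widehat{N})$, and $E_2^{n,0}=H^n(T,\,k^\times)$. The decreasing filtration on $H^2(G,\,k^\times)$ has $F^1=\Ker(\text{res}_N)$ and $F^2$ equal to the image of inflation from $T$, with successive quotients $E_\infty^{0,2}$, $E_\infty^{1,1}$, and $E_\infty^{2,0}$. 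The key observation is that because inflation $H^n(T,\,k^\times)\to H^n(G,\,k^\times)$ is split-injective for every $n$, one must have $E_\infty^{n,0}=E_2^{n,0}$, which forces every $d_r$-differential landing on the bottom row to be zero; in particular $d_2\colon E_2^{1,1}\to E_2^{3,0}$ and $d_3\colon E_3^{0,2}\to E_3^{3,0}$ both vanish. It then follows that $E_\infty^{1,1}=H^1(T,\,\widehat{N})$ and $E_\infty^{0,2}=\Ker\bigl(d_2\colon H^2(N,\,k^\times)^T\to H^2(T,\,\widehat{N})\bigr)$. Now I would combine this with the filtration: since $F^2\cap\tilde{M}(G)=0$, the natural projection identifies $F^1\cap\tilde{M}(G)$ with $F^1/F^2\cong H^1(T,\,\widehat{N})$, supplying the injection $H^1(T,\,\widehat{N})\hookrightarrow\tilde{M}(G)$ and exactness at $\tilde{M}(G)$; and the image of $\text{res}_N$ restricted to $\tilde{M}(G)$ equals $\text{res}_N(H^2(G,\,k^\times))=E_\infty^{0,2}$, which gives exactness at $H^2(N,\,k^\times)^T$.

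The main obstacle I expect is justifying the vanishing of the spectral-sequence differentials incident to the bottom row, which is precisely what promotes $E_\infty^{1,1}$ to all of $H^1(T,\,\widehat{N})$ (rather than a proper subgroup) and identifies $E_\infty^{0,2}$ with $\Ker(d_2^{0,2})$; once the split-injectivity of inflation is leveraged to settle this point, the remainder is a routine filtration chase.
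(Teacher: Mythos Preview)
Your proposal is correct. The paper does not actually prove this theorem: it cites the result from \cite{T} and \cite[Theorem 2.2.5]{K} and remarks that ``it can also be proved by means of the Hochschild--Serre spectral sequence,'' which is exactly the route you take, and your filtration chase (including the key observation that split-injectivity of inflation kills all differentials into the bottom row) is sound.
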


The following result \cite[Theorem 2.1.2]{K} will be useful for our computations.

\begin{theorem}
\label{Sylow restriction}
Let $G$ be a finite group and let $P$ be a Sylow $p$-subgroup of $G$. The restriction map
$H^2(G,\,k^{\times})\to H^2(P,\,k^{\times})$ is injective on the $p$-primary component of $H^2(G,\,k^{\times})$. 
\end{theorem}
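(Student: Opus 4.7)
The plan is to invoke the standard restriction-corestriction (transfer) machinery in group cohomology. Recall that for any finite-index subgroup $H \leq G$ and any $G$-module $M$, there is a corestriction (transfer) map $\mathrm{cor}^G_H: H^n(H,\,M) \to H^n(G,\,M)$ with the key property that the composition $\mathrm{cor}^G_H \circ \mathrm{res}^G_H$ equals multiplication by the index $[G:H]$ on $H^n(G,\,M)$. I would cite or recall this from a standard reference on group cohomology rather than reconstruct it.

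Applying this to $H = P$ and $M = k^\times$, the composition $\mathrm{cor}^G_P \circ \mathrm{res}^G_P$ acts on $H^2(G,\,k^\times)$ as multiplication by $[G:P]$. Since $P$ is a Sylow $p$-subgroup, $[G:P]$ is coprime to $p$, so multiplication by $[G:P]$ is an automorphism of the $p$-primary component $H^2(G,\,k^\times)_{(p)}$ (the order of any element in this component is a power of $p$, hence coprime to $[G:P]$).

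From this the injectivity of $\mathrm{res}^G_P$ on $H^2(G,\,k^\times)_{(p)}$ is immediate: if $\alpha \in H^2(G,\,k^\times)_{(p)}$ satisfies $\mathrm{res}^G_P(\alpha)=0$, then $[G:P]\cdot\alpha = \mathrm{cor}^G_P(\mathrm{res}^G_P(\alpha)) = 0$, and since multiplication by $[G:P]$ is injective on the $p$-primary part, $\alpha = 0$.

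There is no real obstacle here, as the argument is essentially formal once one has the corestriction map and its composition identity with restriction; the only subtlety is making sure one quotes (or constructs) the transfer in degree $2$ correctly and notes that the argument works verbatim for any $n$, not just $n=2$. Since the theorem is stated and used as a black box from \cite{K}, I would simply reference that source and include the two-line transfer argument above for completeness.
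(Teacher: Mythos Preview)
Your proposal is correct. The paper does not actually prove this theorem: it is quoted as \cite[Theorem 2.1.2]{K} and used as a black box, so your restriction--corestriction argument already goes beyond what the paper provides. The transfer argument you give is exactly the standard proof, and your closing remark that one should simply cite \cite{K} matches precisely what the paper does.
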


\section{Fusion categories and their Brauer-Picard groups} 
\label{section: Fusion categories and their Brauer-Picard groups}

For a fusion category $\A$ let $\Aut(\A)$ denote the group of isomorphism classes
of tensor autoequivalences of $\A$. It is known that this group is finite \cite{ENO1}.

A fusion category is called {\em pointed} if all its simple objects are invertible with respect
to the tensor product.  A most general example of a pointed fusion category is the category $\Vec_G^\omega$
of vector spaces graded by a finite group $G$ with the associativity constraint given by a $3$-cocycle
$\omega \in Z^3(G,\, k^\times)$.  In this paper we only consider the case when $\omega$
is cohomologically trivial, i.e., we work with pointed categories of the form $\Vec_G$. 
Let $\delta_g,\, g\in G,$ denote simple objects  of $\Vec_G$. We have  $\delta_g \ot \delta_h  \cong \delta_{gh}$.
In particular,  the unit object of $\Vec_G$  is $\delta_1$. 

The following result is well known.

\begin{proposition}
\label{Aut(VecG)}
Let $G$ be a finite group. Then $\Aut(\Vec_G) \cong  H^2(G,\, k^\times) \rtimes \Aut(G)$.
\end{proposition}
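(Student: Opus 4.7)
The plan is to unpack the data of a tensor autoequivalence of $\Vec_G$ into a pair consisting of a permutation of the simple objects and a $2$-cocycle controlling the tensor structure, and then verify that these glue together into a semidirect product.

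First, I would construct a homomorphism $\rho: \Aut(\Vec_G) \to \Aut(G)$. Given a tensor autoequivalence $(F, J)$, the fact that $F$ is an equivalence means it permutes the isomorphism classes of invertible simples, so $F(\delta_g) \cong \delta_{\theta(g)}$ for a bijection $\theta: G \to G$. The structure isomorphisms $J_{g,h}: F(\delta_g) \otimes F(\delta_h) \xrightarrow{\sim} F(\delta_g \otimes \delta_h)$ force $\delta_{\theta(g)\theta(h)} \cong \delta_{\theta(gh)}$, so $\theta$ must be a group automorphism. Setting $\rho(F,J) = \theta$ gives a well-defined homomorphism, since it depends only on the isomorphism class of $F$.

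Second, I would exhibit a set-theoretic splitting $s: \Aut(G) \to \Aut(\Vec_G)$ that is a group homomorphism. For $\theta \in \Aut(G)$, define $F_\theta$ on objects by $F_\theta(\delta_g) = \delta_{\theta(g)}$ (extended linearly to arbitrary objects) and on morphisms by relabeling, with trivial tensor structure $J^\theta_{g,h} = \id$. The associativity axiom is trivially satisfied, and $F_\theta \circ F_{\theta'} = F_{\theta \theta'}$ on the nose, so $s$ is a homomorphism and $\rho \circ s = \id$.

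Third, I would identify $K := \ker \rho$ with $H^2(G, k^\times)$. An element of $K$ is represented by a tensor autoequivalence $(F,J)$ with $F(\delta_g) \cong \delta_g$, so up to tensor-natural isomorphism we may take $F = \id_{\Vec_G}$. Then $J$ reduces to scalars $J(g,h) \in k^\times$, and the pentagon axiom becomes precisely the $2$-cocycle condition $J(g,h) J(gh,k) = J(g,hk) J(h,k)$, while tensor-natural isomorphism between $(\id, J)$ and $(\id, J')$ amounts to choosing $\eta: G \to k^\times$ with $J'/J = d\eta$. Thus $K \cong Z^2(G,k^\times)/B^2(G,k^\times) = H^2(G,k^\times)$.

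Finally, I would compute the conjugation action of $\Aut(G)$ on $K$ and conclude. A direct check shows that $s(\theta) \cdot (\id, J) \cdot s(\theta)^{-1}$ corresponds to the cocycle $J^\theta(g,h) = J(\theta^{-1}(g), \theta^{-1}(h))$, which is exactly the natural action of $\Aut(G)$ on $H^2(G,k^\times)$ recalled in Section~\ref{cohomology section}. Combined with the split short exact sequence
\begin{equation*}
1 \to H^2(G, k^\times) \to \Aut(\Vec_G) \xrightarrow{\rho} \Aut(G) \to 1,
\end{equation*}
this yields the claimed isomorphism $\Aut(\Vec_G) \cong H^2(G,k^\times) \rtimes \Aut(G)$. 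The main technical point is the correct bookkeeping in step three, where one must verify that the pentagon condition for the tensor structure translates precisely into the $2$-cocycle identity (with a normalization convention matching the one chosen for $F_\theta$); everything else is formal.
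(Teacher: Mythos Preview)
Your argument is correct and is the standard proof of this well-known fact. The paper itself does not supply a proof of this proposition---it simply states that the result is well known and then records the explicit description of the autoequivalence $F_{(a,\zeta)}$---so there is nothing to compare against; your construction of $F_\theta$ and $(\id,J)$ is exactly what the paper encodes in its formula for $F_{(a,\zeta)}$.
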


For  $\zeta \in H^2(G,\, k^\times)$ and $a \in \Aut(G)$ the corresponding
autoequivalence  $F_{(a,\zeta)}$  of $\Vec_G$ is defined as follows. As a functor,
$F_{(a,\zeta)}(\delta_g) = \delta_{a(g)}$, while the tensor structure of $F_{(a,\zeta)}$
is given by
\[
\zeta(g,\,h)\id_{\delta_{a(gh)}}: 
F_{(a,\zeta)}(\delta_g) \otimes F_{(a,\zeta)}(\delta_h) \xrightarrow{\sim} F_{(a,\zeta)}(\delta_{gh}), \qquad g,h\in G.
\] 

Let $\A$ be a fusion category. The notion of a tensor product $ \bt_\A$ 
of $\A$-bimodule categories was introduced in \cite{ENO2}.  With respect to this product
equivalence classes of $\A$-bimodule categories form a monoid. The unit of this monoid
is the regular $\A$-bimodule category $\A$.   An $\A$-bimodule category $\M$ is called
{\em invertible} if there is an $\A$-bimodule category $\N$ such that $\M \bt_\A \N \cong \A$
and $\N \bt_\A \M \cong \A$.  By definition, the {\em Brauer-Picard} group of $\A$
is the group $\BrPic(\A)$ of equivalence classes of invertible $\A$-bimodule categories. 

The Brauer-Picard group is an important invariant of a fusion category. It is used, in particular,
in the classification of extensions of fusion categories \cite{ENO2}.  Let $G$ be a finite group.
By a {\em $G$-extension} of a fusion category $\A$  we mean a faithfully $G$-graded fusion category
\begin{equation}
\label{Extension}
\B =\bigoplus_{g\in G}\, \B_g,\qquad \text{ with }  \B_e\cong \A.
\end{equation}
Such extensions are parameterized by group homomorphisms $c: G\to \BrPic(\A)$ and certain 
cohomological data associated to $G$ (provided that certain obstructions vanish, see \cite{ENO2} for details).  
One has $c(g)=\B_g$ for all $g\in G$. We say that an extension \eqref{Extension} is {\em non-trivial}
if $\B_g \not\cong \A$ (as a left $\A$-module category) for some $g\in G$. 

\begin{remark}
\label{rel prime ext}
In a particularly simple situation when $|G|$ and the Frobenius-Perron dimension of $\A$ are relatively
prime,  for any fixed homomorphism $c:G\to \BrPic(\A)$ extensions \eqref{Extension}  exist and 
are parameterized  by a torsor over $H^3(G,\, k^\times)$, see \cite[Theorem 9.5]{ENO2}. 
\end{remark}

\section{Parameterization of $\BrPic(\Vec_G)$}
\label{section: parameterization of BrPic}

Let $G$ be a finite group.
In this Section we recall a group-theoretical parameterization of the Brauer-Picard group of $\Vec_G$.
This description was obtained by Davydov in \cite{Da2} (in terms of equivalences of centers, cf.\ isomorphism 
\eqref{categorical Phi}). We provide an alternative argument for the reader's convenience.

Recall \cite{O} that  indecomposable  $\Vec_G$-module categories are parameterized by pairs  $(L,\, \mu)$, 
where $L \subset G$ is a subgroup and $\mu \in Z^2(L,\, k^\times)$.  Namely, the category $\M(L,\, \mu)$ 
corresponding to  such a pair  consists of vector spaces graded by the set of cosets $G/L$ with the action of $\Vec_G$ 
induced by the translation action of $G$ on $G/L$ and the module category structure induced  by $\mu$.

Two $\Vec_G$-module categories $\M(L,\, \mu)$  and $\M(L',\, \mu')$ are equivalent if and only if
there is $g\in G$ such that $L'=gLg^{-1}$ and $2$-cocycles $\mu'$ and $\mu^g$ are cohomologous
in $H^2(L',\, k^\times)$. 

Fix a subgroup $L$ of $G$.
Let $E$ denote the group of isomorphism classes  of right $\Vec_G$-module 
autoequivalences of   $\M(L,\, \mu)$  isomorphic to the identity as an additive functor.
It follows from \cite{N} that there is a group isomorphism  
\begin{equation}
\label{E to Hom}
\iota: E \to \widehat{L} : F \mapsto \iota_F
\end{equation}
such that the $\Vec_G$-module functor structure $\delta_x\ot F(L) \xrightarrow{\sim} F(\delta_x \ot L) = F(xL)$
is given by $\iota_F(x) \id_{F(xL)}$ for all $x\in L$.

Let $G_1,\, G_2$ be a pair of normal subgroups of $G$ centralizing each other. Let us define
\[
L_1:= G_1 \cap L,\, L_2 :=G_2 \cap L.
\]
Any  $2$-cocycle $\mu\in Z^2(L,\,k^\times)$ determines a group homomorphism  
\begin{equation}
\label{a alt}
a:  L_1 \to \widehat{L_2} :  g \mapsto a_g, 
\mbox{ where } a_g(h) :=\frac{\mu(g, h)}{\mu(h, g)},\, h\in L_2.
\end{equation}
Similarly, $\mu$ determines a group homomorphsim $L_2 \to \widehat{L_1}$.

\begin{lemma}
\label{when id}
Let $G_1,\, G_2$ be a pair of commuting normal subgroups of $G$ such that
$G_1L = G_2 L= G$. For $g\in G_1$ let $F_g$ denote the functor of left tensor 
multiplication by $\delta_g$ on $\M(L,\, \mu)$. Then $F_g$ is equivalent to
the identity as a left $\Vec_{G_2}$-module
autoequivalenvce of  $\M(L,\, \mu)$ if and only if $g\in L_1$ and
$a_g =1$ on $L_2$.
\end{lemma}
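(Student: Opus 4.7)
The plan is to analyze $F_g$ on the simple objects of $\M(L,\mu)$ (indexed by cosets $yL \in G/L$ with action $\delta_x \ot X_{yL} = X_{xyL}$) and then explicitly compute the left $\Vec_{G_2}$-module structure isomorphism that $F_g$ carries. First I would establish the necessity of $g \in L_1$: since $F_g(X_{yL}) = X_{gyL}$, a non-zero morphism $F_g(X_{yL}) \to X_{yL}$ requires $gyL = yL$, i.e., $g \in \bigcap_{y \in G} yLy^{-1}$. Using $G = G_2 L$, every $y$ may be written $g_2 l$ with $g_2 \in G_2$ and $l \in L$, so $yLy^{-1} = g_2 L g_2^{-1}$; the centralizing property $g_2 g g_2^{-1} = g$ then collapses the condition to $g \in L$, hence $g \in L_1$.

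Assuming $g \in L_1$, I would compute the module-functor structure by choosing coset representatives $r_C \in G_2$ for each $C \in G/L$ (possible because $G_2 L = G$; take $r_L = 1$) and defining $\sigma_{x, C} \in L$ by $x r_C = r_{xC} \sigma_{x, C}$. The module associator of $\M(L, \mu)$ on a simple $X_C$ is then the scalar $a_{x, y, X_C} = \mu(\sigma_{x, yC}, \sigma_{y, C})$ (checked via the pentagon and the $2$-cocycle identity for $\mu$). The canonical left $\Vec_{G_2}$-module structure on $F_g$, arising from the identification $\delta_g \ot \delta_h = \delta_{gh} = \delta_{hg} = \delta_h \ot \delta_g$ for $h \in G_2$, has components $\phi_{\delta_h, X_C} = a_{h, g, X_C}\, a_{g, h, X_C}^{-1}$. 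Since $gC = C$ for $g \in L_1$, the choice $r_C \in G_2$ together with the centralizing property yields $\sigma_{g, C} = \sigma_{g, hC} = g$ and $\sigma_{h, C} \in L_2$, so $\phi_{\delta_h, X_C}$ collapses to $a_g(\sigma_{h, C})^{-1}$.

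Finally, an isomorphism $\eta : F_g \Rightarrow \id$ of left $\Vec_{G_2}$-module functors is determined by scalars $\eta_C \in k^\times$ satisfying $\phi_{\delta_h, X_C} = \eta_{hC}/\eta_C$. If $a_g|_{L_2} = 1$, then $\phi \equiv 1$ and the constant $\eta \equiv 1$ works (using $G_2 L = G$ to ensure $G_2$ acts transitively on $G/L$, so constancy is the only constraint). Conversely, if some $h \in L_2$ satisfies $a_g(h) \neq 1$, specializing to $C = L$ gives $hC = C$ and $\sigma_{h, L} = h$, forcing $a_g(h)^{-1} = \eta_L/\eta_L = 1$, a contradiction. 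I expect the main obstacle to be the explicit computation of $\phi$ in the second step: tracking $\mu$ through the $\sigma_{x, C}$ data and exploiting both hypotheses ($G_1, G_2$ centralize each other and $G_2 L = G$) in just the right way to reduce $\phi$ to a clean expression in terms of the character $a_g|_{L_2}$.
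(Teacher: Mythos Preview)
Your argument is correct and reaches the same conclusion as the paper, but the route is genuinely different. The paper's proof invokes Shapiro's Lemma to replace $\mu \in Z^2(L,k^\times)$ by a cocycle $\tilde\mu \in Z^2(G,\Hom_L(G,k^\times))$ defined on all of $G$, writes the $\Vec_{G_2}$-module structure on $F_g$ as the ratio $\tilde\mu(g,g_2)(xL)/\tilde\mu(g_2,g)(xL)$, and then appeals to the isomorphism $E\xrightarrow{\sim}\widehat{L}$ of \eqref{E to Hom} (adapted to $\Vec_{G_2}$-module endofunctors, so that the target becomes $\widehat{L_2}$) to read off the condition $a_g|_{L_2}=1$. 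Your approach bypasses both ingredients: by choosing coset representatives $r_C$ inside $G_2$ you make the associator of $\M(L,\mu)$ completely explicit as $\mu(\sigma_{x,yC},\sigma_{y,C})$, verify directly that $\sigma_{g,C}=g$ and $\sigma_{h,C}\in L_2$ from the centralizing hypothesis, and reduce the module-functor compatibility for $\eta$ to the scalar equation $a_g(\sigma_{h,C})^{-1}=\eta_{hC}/\eta_C$, which is tested at $C=L$ to force $a_g|_{L_2}=1$. This is more elementary and self-contained (no Shapiro isomorphism, no citation of \cite{N}), at the cost of a longer coordinate computation; the paper's version is shorter but leans on those external results and leaves the key formula for the module structure unjustified. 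Your first step also gives a cleaner reason why $g$ must lie in $L_1$ rather than merely in $\bigcap_y yLy^{-1}$: the paper simply asserts this, whereas you derive it from $G=G_2L$ and the centralizing hypothesis.
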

\begin{proof}
It is clear that $F_g$ is isomorphic to $\id_{\M(L,\, \mu)}$ as an additive
functor if and only if $g\in  L_1$.
Let $C =\Hom_L(G,\, k^\times)$ and let
$\tilde{\mu}\in Z^2(G,\, C)$ be a $2$-cocycle such that the cohomology
class of $\tilde{\mu}$ in $H^2(G,\, C)$ is identified with the class of $\mu$
in $H^2(L,\, k^\times)$ via Shapiro's Lemma, i.e.,
$\mu(h_1,\, h_2) = \tilde\mu(h_1,\, h_2)(L)$ for all $x_1,x_2\in L$. 

Then the $\Vec_{G_2}$-module
structure on $F_g$ is given by 
\[
\tfrac{\tilde\mu(g,\, g_2)(xL)}{\tilde\mu(g_2,\, g)(xL)}\, \id_{g_2xL}: 
F_g( \delta_{g_2} \ot xL) \xrightarrow{\sim} \delta_{g_2} \ot F_g(xL),\qquad x,\,g_2\in L_2. 
\]
Using isomorphism  \eqref{E to Hom} we conclude that for $g\in L_1$ 
one has $F_g \cong \id$ as
a left $\Vec_{G_2}$-module functor if and only if $a_g=1$, as required.
\end{proof}

Let $\mu$ be a $2$-cocycle on $G$.  Let $L_1,\, L_2 \subset G$
be a pair of subgroups centralizing each other.  It is straightforward to check
that the function
\begin{equation}
\label{def alt}
Alt(\mu) : L_1\times L_2 \to k^\times : (x_1,\, x_2) \mapsto \frac{\mu(x_1,\, x_2)}{\mu(x_2,\, x_1)},
\end{equation}
is a bicharacter, i.e., is multiplicative in both arguments.

Note that a $\Vec_G$-bimodule category is the same thing as a $\Vec_{G \times G^\op}$-module category,
where $G^\op$ is $G$ with the opposite multiplication. 

\begin{proposition}
\label{M(L,mu) inv}
Let $G$ be a finite group, let   $L$ be a subgroup of  $G \times G^\op$,
and let $\mu \in Z^2(L,\, k^\times)$ be a $2$-cocycle. 
Then $\Vec_G$-bimodule category  $\M(L,\,\mu)$ is invertible if and only if
the following  three conditions are satisfied:
\begin{enumerate}
\item[(i)] $L (G \times \{1\}) = L(\{ 1\} \times G^\op) = G \times G^\op$, 
\item[(ii)] $L_1:= L \cap (G \times \{1\}) $ and $L_2:= L \cap (\{ 1\} \times G^\op)$ are Abelian  groups, 
\item[(iii)] bicharacter  $Alt(\mu) : L_1 \times L_2 \to k^\times$ defined in \eqref{def alt}
is non-degenerate.
\end{enumerate}
\end{proposition}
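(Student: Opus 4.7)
My plan is to use the general criterion that a $\Vec_G$-bimodule category $\M$ is invertible iff it is indecomposable both as a left and as a right $\Vec_G$-module category and the canonical tensor functor $\Phi\colon \Vec_G \to \Fun_{\Vec_G}(\M, \M)^{\op}$ (with $\Fun_{\Vec_G}$ standing for right-$\Vec_G$-module endofunctors) is a tensor equivalence. Since a $\Vec_G$-bimodule category is the same as a $\Vec_{G \times G^{\op}}$-module category, $\M(L, \mu)$ fits into this framework, with the two side actions obtained by restriction to the subgroups $G \times \{1\}$ and $\{1\} \times G^{\op}$ of $G \times G^{\op}$.

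I would first argue that condition~(i) is precisely the indecomposability of $\M(L, \mu)$ on both sides: the group $G \times \{1\}$ acts transitively on the cosets $(G \times G^{\op})/L$ iff $L(G \times \{1\}) = G \times G^{\op}$, and the other equality in~(i) is symmetric. Since indecomposability on both sides is necessary for invertibility, I would assume (i) henceforth.

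Next I would analyze $\Phi$ using Lemma~\ref{when id} applied with $G_1 = G \times \{1\}$ and $G_2 = \{1\} \times G^{\op}$, which are commuting normal subgroups of $G \times G^{\op}$ satisfying $G_i L = G \times G^{\op}$ by~(i). The lemma says that $F_g = \Phi(\delta_{(g,1)})$, the functor of left tensor multiplication, is isomorphic to the identity as a right $\Vec_G$-module endofunctor iff $g \in L_1$ and $a_g = 1$, where $a\colon L_1 \to \widehat{L_2}$ is the homomorphism arising from~\eqref{a alt}. Hence the fibers of $\Phi$ on isomorphism classes of simples are the cosets of $\ker(a)$, and $\Phi$ is faithful iff $a$ is injective. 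Both $\Vec_G$ and $\Fun_{\Vec_G}(\M, \M)^{\op}$ are fusion categories of Frobenius--Perron dimension $|G|$ (the latter is Morita equivalent to $\Vec_G$ through the indecomposable right module category $\M$), and a faithful tensor functor between two fusion categories of equal FPdim is automatically an equivalence; thus $\Phi$ is a tensor equivalence iff $a$ is injective.

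Running the symmetric argument on the left side forces also the injectivity of $b\colon L_2 \to \widehat{L_1}$. Condition (i) gives $|L_1| = |L_2| = |L|/|G|$, so simultaneous injectivity of $a$ and $b$ is equivalent to both being bijective, which is exactly the non-degeneracy of $\Alt(\mu)$ in~(iii); moreover injectivity of $a$ embeds $L_1$ into the abelian group $\widehat{L_2}$, forcing the abelianness of $L_1$, and symmetrically for $L_2$, so~(ii) follows. The main obstacle I foresee is cleanly justifying the FPdim matching between $\Vec_G$ and $\Fun_{\Vec_G}(\M, \M)^{\op}$ together with the standard fact that a faithful tensor functor between fusion categories of equal FPdim is an equivalence; once these are in hand, Lemma~\ref{when id} does the substantive work of translating invertibility of $\Phi$ into non-degeneracy of $\Alt(\mu)$.
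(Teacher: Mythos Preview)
Your proposal is correct and follows essentially the same route as the paper's proof: both reduce condition~(i) to two-sided indecomposability, invoke the \cite{ENO2} criterion that invertibility amounts to the canonical tensor functors $\Vec_G \to \Fun(\M,\M)_{\Vec_G}$ and $\Vec_G \to \Fun_{\Vec_G}(\M,\M)$ being equivalences, and then apply Lemma~\ref{when id} (with $G_1 = G\times\{1\}$, $G_2 = \{1\}\times G^{\op}$) to translate this into injectivity of the homomorphisms $L_1\to\widehat{L_2}$ and $L_2\to\widehat{L_1}$, i.e., conditions (ii)--(iii). Your write-up is in fact slightly more explicit than the paper's at two points --- the FPdim argument showing that faithfulness of a tensor functor between fusion categories of equal dimension forces equivalence, and the observation that injectivity of $a$ forces $L_1$ to be abelian via the embedding into $\widehat{L_2}$ --- but these are exactly the steps the paper compresses into the clause ``Since those functors are tensor\ldots''.
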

\begin{proof}
Let us denote $\M:= \M(L,\,\mu)$.
Condition (i) is equivalent to $(G \times G^\op)/L$ being transitive
as both left and right $G$-set, i.e., to $\M$ being indecomposable
as left and right $\Vec_G$-module category. 
It implies that $L_1$ is a normal subgroup of $G$ and 
$L_2$ is a normal subgroup of $G^\op$. 

For $g\in G$ let $L(g)$ (respectively, $R(g)$) denote the additive endofunctor
of $\M$ given by the action of $\delta_g \bt 1 $ (respectively, $1\bt \delta_g$).
By \cite{ENO2}  $\M$  is invertible if and only if
the functors $\Vec_G \to  \Fun_{\Vec_G}(\M,\, \M):  g \mapsto R(g)$ 
(respectively, $\Vec_G \to  \Fun(\M,\, \M)_{\Vec_G}:  g \mapsto L(g)$)
are equivalences.  Since those functors are tensor, the above conditions
are equivalent to $L(g) \not\cong \id_\M$ as a right $\Vec_G$-module functor 
(respectively,  to $R(g) \not\cong \id_\M$ as a left $\Vec_G$-module functor)
for all $g\neq 1$.

We apply Lemma~\ref{when id} with $G$ replaced by $G\times G^\op$,
$G_1 = G \times \{1\}$ and $G_2= \{ 1\} \times G^\op$.   It follows that
the above conditions are satisfied if and only if group homomorphisms
defined as in \eqref{a alt}, i.e., 
\begin{equation}
\label{rho1}
 L_1 \to \widehat{L_2} :  x \mapsto a_x, 
\mbox{ where } a_x(h) :=\frac{\mu(x, h)}{\mu(h, x)},\, h\in L_2,
\end{equation}
and 
\begin{equation}
\label{rho2}
 L_2 \to \widehat{L_1} :  y \mapsto a'_y, 
\mbox{ where } a'_y(g) :=\frac{\mu(y, g)}{\mu(g, y)},\, g\in L_1,
\end{equation}
are injective. This is equivalent to $L_1,\, L_2$ being Abelian and  
$Alt(\mu)$ being non-degenerate on $L_1\times L_2$.
\end{proof}

\begin{remark}
\label{muresinv}
For an invertible $\Vec_G$-bimodule category  $\M(L,\,\mu)$  the subgroups $L_1\subset G$
and $L_2 \subset G^\op$ are normal  and restrictions $\mu|_{L_1\times L_1}$
and $\mu|_{L_2\times L_2}$ are $G$-invariant.
\end{remark}

\begin{remark}
\label{restriction to one-sided}
It is easy to  describe a one-sided restriction of the  $\Vec_G$-module category $\M(L,\,\mu)$
from Proposition~\ref{M(L,mu) inv}.  Namely, as a left $\Vec_G$-module category
it is equivalent to $\M(L_1,\, \mu|_{L_1\times L_1})$.   
\end{remark}


There is a convenient way to determine which of the categories $\M(L,\, \mu)$ 
 described in Propositon~\ref{M(L,mu) inv} are involutions in the Brauer-Picard group.  


\begin{remark}
\label{Mop}
Let $G$ be a finite group, let   $L$ be a subgroup of  $G \times G^\op$,
and let $\mu \in Z^2(L,\, k^\times)$ be a $2$-cocycle satisfying conditions
of Proposition~\ref{M(L,mu) inv}. Then the inverse of $\M(L,\, \mu)$ in $\BrPic(\Vec_G)$
is  $\M(L^\vee, \, (\mu^\vee)^{-1})$, where
\begin{eqnarray*}
L^\vee &=& \{ (x_2,\, x_1) \mid (x_1,\, x_2) \in L \}, \\
\mu^\vee(  (x_1,\, x_2) ,\, (y_1,\, y_2)   ) &=& \mu ((x_2^{-1},\, x_1^{-1}),\, (y_2^{-1},\, y_1^{-1})).
\end{eqnarray*} 
Indeed, it was shown in \cite{ENO2} that  the inverse of a bimodule category is given by taking
its opposite. 
\end{remark}

\begin{corollary}
\label{order 2}
The category $\M(L,\, \mu)$  has order $\leq 2$ in $\BrPic(\Vec_G)$ if there is $g\in G \times G^\op$
such that $L^\vee = gLg^{-1}$ and $\mu^g$ and $(\mu^\vee)^{-1}$ are cohomologous in $H^2(L^\vee,\,k^\times)$.
\end{corollary}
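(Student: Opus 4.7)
The plan is to reduce the statement to the equivalence criterion for module categories combined with the inverse formula from Remark~\ref{Mop}. A category $\M$ in $\BrPic(\Vec_G)$ has order at most $2$ if and only if it is equivalent to its own inverse. So the task is to check that the stated condition is exactly what it takes for $\M(L,\mu)$ to be equivalent, as a $\Vec_G$-bimodule category, to its inverse.

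First I would recall the two ingredients. On the one hand, Remark~\ref{Mop} identifies the inverse of $\M(L,\mu)$ in $\BrPic(\Vec_G)$ with $\M(L^\vee,(\mu^\vee)^{-1})$. On the other hand, a $\Vec_G$-bimodule category is nothing but a $\Vec_{G\times G^\op}$-module category, and the classification recalled at the start of Section~\ref{section: parameterization of BrPic} asserts that $\M(L,\mu)\cong \M(L',\mu')$ as $\Vec_{G\times G^\op}$-module categories precisely when there is an element $g\in G\times G^\op$ with $L'=gLg^{-1}$ and with $\mu'$ and $\mu^g$ cohomologous in $H^2(L',k^\times)$.

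Applying the latter with $L'=L^\vee$ and $\mu'=(\mu^\vee)^{-1}$ turns the condition $\M(L,\mu)\cong \M(L^\vee,(\mu^\vee)^{-1})$ into the existence of $g\in G\times G^\op$ such that $L^\vee=gLg^{-1}$ and $\mu^g$ is cohomologous to $(\mu^\vee)^{-1}$ in $H^2(L^\vee,k^\times)$. This is the hypothesis of the corollary, so the implication follows and $\M(L,\mu)^{\bt 2}\cong \Vec_G$.

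There is essentially no obstacle: the argument is a one-line combination of Remark~\ref{Mop} with the parameterization of indecomposable $\Vec_H$-module categories (applied to $H=G\times G^\op$), and the only care required is to make sure the conjugation $g(\cdot)g^{-1}$ and the cocycle twist $\mu\mapsto\mu^g$ are interpreted inside $G\times G^\op$ rather than inside $G$.
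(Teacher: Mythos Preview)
Your proposal is correct and matches the paper's intended argument exactly: the paper states this as an immediate corollary of Remark~\ref{Mop} together with the equivalence criterion for $\Vec_{G\times G^{\op}}$-module categories recalled at the beginning of Section~\ref{section: parameterization of BrPic}, and that is precisely the combination you carry out.
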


\section{Braided autoequivalences of centers}
\label{section: Braided autoequivalences of centers}

Let $\C$ be a braided fusion category with braiding $c_{X,Y}: X \ot Y \xrightarrow{\sim} Y \ot X$.  
Let $\D$ be a fusion subcategory of $\C$.  The {\em centralizer} of $\D$ in $\C$  \cite{Mu1} is the fusion subcategory
$\D'\subset \C$ consisting of objects $X$ such that $c_{YX}\circ c_{XY} =\id_{X\ot Y}$ for all objects $Y$ in $\C$. 
A braided fusion category $\C$ is {\em symmetric} if $\C=\C'$ and {\em non-degenerate} if $\C' =\Vec$.
A symmetric fusion category is called {\em Tannakian} if it is equivalent to $\Rep(G)$, the category of representations
of a finite group $G$. 

Let $\Aut^{br}(\C)$ denote the group of isomorphism classes of braided autoequivalences of $\C$.
The following result is well known.

\begin{proposition}
\label{Aut-br=Out}
Let $G$ be a finite group.  We have $\Aut^{br}(\Rep(G)) \cong \Out(G)$.
\end{proposition}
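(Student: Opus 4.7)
The plan is to exhibit an explicit isomorphism via pullback of representations. For $\theta \in \Aut(G)$, define a braided autoequivalence $F_\theta \colon \Rep(G) \to \Rep(G)$ by $F_\theta(V,\rho) = (V, \rho \circ \theta^{-1})$, with identity tensor constraint (which is well defined because the regular representation is multiplicative). Each $F_\theta$ is manifestly a symmetric, hence braided, tensor autoequivalence, and with appropriate conventions $\theta \mapsto F_\theta$ is a group homomorphism $\Aut(G) \to \Aut^{br}(\Rep(G))$.

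Next I would check that this homomorphism factors through $\Out(G)$. If $\theta = \Ad_g$ is an inner automorphism, then the collection of linear maps $\rho(g) \colon V \to V$ defines a natural tensor isomorphism $\id_{\Rep(G)} \xrightarrow{\sim} F_\theta$; indeed $\rho(g)\,\rho(h) = \rho(ghg^{-1})\,\rho(g) = \rho(\theta(h))\,\rho(g)$ shows $G$-equivariance, and multiplicativity $(\rho\otimes\sigma)(g) = \rho(g)\otimes\sigma(g)$ shows tensoriality. This yields a well-defined homomorphism $\Phi \colon \Out(G) \to \Aut^{br}(\Rep(G))$.

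For surjectivity, I would invoke Tannakian reconstruction. Let $\omega \colon \Rep(G) \to \Vec$ be the forgetful fiber functor, so that $G \cong \Aut^{\otimes}(\omega)$. Given any braided autoequivalence $F$ of $\Rep(G)$, the composite $\omega \circ F$ is another symmetric fiber functor on $\Rep(G)$, and since $\Rep(G)$ is Tannakian its symmetric fiber functor is unique up to tensor isomorphism, so there exists $\eta \colon \omega \xrightarrow{\sim} \omega \circ F$. The conjugation $g \mapsto \eta^{-1} \circ F(g) \circ \eta$ defines an automorphism $\theta_F \in \Aut(G)$; different choices of $\eta$ differ by an element of $\Aut^{\otimes}(\omega) = G$, so the class $[\theta_F] \in \Out(G)$ is canonical. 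A direct comparison of fiber functors shows $F \cong F_{\theta_F}$.

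For injectivity, suppose $F_\theta \cong \id_{\Rep(G)}$ as a braided tensor functor; composing with $\omega$ gives a tensor isomorphism $\omega \cong \omega \circ F_\theta$, which must be implemented by some $g \in \Aut^{\otimes}(\omega) = G$, and unwinding the definition forces $\theta = \Ad_g$. The main obstacle is the invocation of Tannakian reconstruction in the symmetric fusion category setting, specifically uniqueness of the fiber functor and the identification $\Aut^{\otimes}(\omega) = G$; once these inputs are taken as known, everything else is straightforward bookkeeping of natural isomorphisms.
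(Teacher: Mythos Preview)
Your proposal is correct and takes essentially the same approach as the paper: both arguments rest on Deligne's Tannakian reconstruction (uniqueness of the symmetric fiber functor on $\Rep(G)$ and the identification $\Aut^{\otimes}(\omega)\cong G$), and both conclude by checking that an autoequivalence is trivial precisely when the corresponding automorphism of $G$ is inner. Your version is more explicit in first constructing the map $\Out(G)\to\Aut^{br}(\Rep(G))$ via pullback and then verifying bijectivity, whereas the paper proceeds directly from the reconstruction side, but the content is the same; one small quibble is that the parenthetical ``because the regular representation is multiplicative'' is not the right justification for the identity tensor constraint---what you need is simply that $(\rho\otimes\sigma)\circ\theta^{-1}=(\rho\circ\theta^{-1})\otimes(\sigma\circ\theta^{-1})$.
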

\begin{proof}
By the result of Deligne \cite{De},  every braided tensor functor $F: \Rep(G) \to \Vec$ is isomorphic
to the obvious forgetful functor. Furthermore,  the group $G_F$ of tensor automorphisms of $F$ is
isomorphic to $G$.  Hence, a braided tensor autoequivalence  $\alpha \in \Aut^{br}(\Rep(G))$ induces
a group automorphism $\iota(\alpha)\in \Aut(G_F)$. The assignment $V \mapsto F(V)$ is a braided
tensor equivalence between $\Rep(G)$ and $\Rep(G_F)$. Under this equivalence $\alpha$
corresponds to the autoequivalence of $\Rep(G_F)$ induced by the automorphism $\iota(\alpha)$.

Hence, every braided autoequivalence of $\Rep(G)$ is induced by an automorphism of $G$. 
It is straightforward to verify that the above autoequivalence $\iota(\alpha)$ is isomorphic to the identity
tensor functor if and only if the corresponding group automorphism is inner.  This implies the result.
\end{proof}

For any fusion category $\A$ let $\Z(\A)$ denote its {\em center}.  The objects of $\Z(\A)$ are pairs
$(Z,\,\gamma)$ where $Z$ is an object of $\A$  and $\gamma=\{\gamma_X\}_{X\in \A}$, where
\[
\gamma_X: X\ot Z \xrightarrow{\sim} Z\ot X,
\]
is a natural isomorphism  
satisfying certain compatibility conditions.  We will usually simply write $Z$ for $(Z,\,\gamma)$. 
It is known that $\Z(\A)$ is a non-degenerate braided fusion category \cite{Mu2, DGNO}. 

Let $\A$ be a fusion category and let $\M$ be an invertible $\A$-bimodule category.
One assigns to $\M$ a braided autoequivalence $\Phi_\M$ of $\Z(\A)$ as follows.
Note that  $\Z(\A)$ can be identified with the category of $\A$-bimodule endofunctors of $\M$ in two ways:
via the functors $Z \mapsto Z\ot -$ and $Z \mapsto - \ot Z$.  Define $\Phi_\M$ in such a way that
there is an isomorphism of $\A$-bimodule functors 
\begin{equation}
\label{PhiM}
Z\ot - \cong -\ot \Phi_\M(Z) 
\end{equation}
for all $Z\in \Z(\A)$.

The following result was established in \cite{ENO2}.

\begin{theorem}
\label{BrPic=Aut-br}
Let $\A$ be a fusion category. The assignment 
\begin{equation}
\label{M to phiM}
\M \mapsto \Phi_\M
\end{equation}
 gives rise to
an isomorphism 
\begin{equation}
\label{main iso}
\BrPic(\A) \simeq \Aut^{br}(\Z(\A)). 
\end{equation}
\end{theorem}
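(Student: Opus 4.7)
The plan is to build the isomorphism \eqref{main iso} in stages: first showing that $\Phi_\M$ is a well-defined braided autoequivalence, then that $\M\mapsto\Phi_\M$ is multiplicative, and finally establishing both injectivity and surjectivity.

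For the construction, I would exploit the fact that for any $\A$-bimodule category $\M$ and any $Z\in\Z(\A)$, the half-braiding on $Z$ promotes both $L_Z:m\mapsto Z\ot m$ and $R_Z:m\mapsto m\ot Z$ to $\A$-bimodule endofunctors of $\M$. This yields two tensor functors $\Z(\A)\to\End_{\A,\A}(\M)$. When $\M$ is invertible, the action functors $\A\to\Fun(\M,\M)_\A$ and $\A\to{}_\A\Fun(\M,\M)$ are tensor equivalences, and a standard argument shows $\End_{\A,\A}(\M)$ is a non-degenerate braided fusion category Morita equivalent to $\Z(\A)$; moreover both $Z\mapsto L_Z$ and $Z\mapsto R_Z$ are already braided equivalences onto it. I would then define $\Phi_\M(Z)$ as the unique object of $\Z(\A)$ with $L_Z\cong R_{\Phi_\M(Z)}$ as $\A$-bimodule functors, which is exactly \eqref{PhiM}, and check that the tensor and braided structure of $\Phi_\M$ are forced by those of $L$ and $R$.

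Next I would verify that the assignment is a group homomorphism. For invertible bimodules $\M,\N$, the isomorphism $Z\ot -\cong -\ot\Phi_\M(Z)$ on $\M$ combined with $\Phi_\M(Z)\ot -\cong -\ot\Phi_\N(\Phi_\M(Z))$ on $\N$ propagates through the relative tensor product to show $\Phi_{\M\bt_\A\N}\cong\Phi_\N\circ\Phi_\M$. For injectivity, if $\Phi_\M$ is isomorphic to the identity as a braided tensor autoequivalence, then $L_Z\cong R_Z$ coherently in $Z$, so the combined left-right action of $\A\bt\A^{\op}$ on $\M$ extends to a central action of $\Z(\A)$ whose restriction along the forgetful functor reproduces the original bimodule structure. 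Using that $\A$ is the unique (up to equivalence) indecomposable invertible $\A$-bimodule category that arises this way, one concludes $\M\cong\A$.

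The main obstacle is surjectivity: given $\alpha\in\Aut^{br}(\Z(\A))$, to exhibit an invertible bimodule $\M_\alpha$ with $\Phi_{\M_\alpha}\cong\alpha$. My approach would be to twist the regular bimodule $\A$ by $\alpha$. Concretely, view $\A$-bimodule categories as module categories over $\A\bt\A^{\op}$, and recall that the forgetful tensor functor $F:\Z(\A)\to\A\bt\A^{\op}$ makes every such bimodule a $\Z(\A)$-central module; replace $F$ by $F\circ\alpha$ to obtain a new bimodule $\M_\alpha$ on the same underlying category as $\A$ but with the right action twisted through $\alpha$. Invertibility of $\M_\alpha$ is inherited from that of $\A$ together with the invertibility of $\alpha$, and the defining relation \eqref{PhiM} becomes $Z\ot -\cong -\ot\alpha(Z)$ by construction, so $\Phi_{\M_\alpha}\cong\alpha$. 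The delicate point here is showing that the twisted structure really is an $\A$-bimodule category (not just a $\Z(\A)$-module), which uses the braided nature of $\alpha$ to ensure compatibility of the two $\A$-actions; this is the step I expect to require the most care, and it is the place where the assumption that $\alpha$ is braided, rather than merely tensor, is essential.
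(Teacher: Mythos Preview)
The paper does not prove this theorem; it is quoted from \cite{ENO2}. However, Remark~\ref{discussing inverse} does record the inverse construction used there: given $\alpha\in\Aut^{br}(\Z(\A))$ one forms the commutative algebra $A=\alpha^{-1}(I(\be))$ in $\Z(\A)$, where $I$ is right adjoint to the forgetful functor $F:\Z(\A)\to\A$, and takes $\M_\alpha$ to be an indecomposable component of the category of $F(A)$-modules in $\A$.

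Your surjectivity argument, by contrast, has a genuine gap. You invoke a ``forgetful tensor functor $F:\Z(\A)\to\A\bt\A^{\op}$'', but no such tensor functor exists; the forgetful functor from the center lands in $\A$, not in $\A\bt\A^{\op}$. More fundamentally, an $\A$-bimodule structure on $\M$ is \emph{not} determined by the induced $\Z(\A)$-module structure, so precomposing the latter with $\alpha$ does not produce a new $\A$-bimodule category. The phrase ``the right action twisted through $\alpha$'' is therefore undefined: $\alpha$ acts on $\Z(\A)$, not on $\A$, and there is no way to push it down. A concrete sanity check: for $\A=\Vec_G$ your construction would force every $\M_\alpha$ to have the same underlying abelian category as $\Vec_G$, hence $|G|$ simple objects; but the invertible bimodules $\M(L,\mu)$ of Proposition~\ref{M(L,mu) inv} have $|G\times G^{\op}|/|L|$ simple objects, which varies with $L$. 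The \'etale-algebra construction in \cite{ENO2} is genuinely needed here, and the resulting $\M_\alpha$ typically does not sit on the underlying category of $\A$.

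Your construction of $\Phi_\M$, the homomorphism property, and the outline of injectivity are in line with the argument in \cite{ENO2}; it is only the surjectivity step that fails.
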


\begin{remark}
\label{discussing inverse}
The inverse to the above isomorphism \eqref{M to phiM} is constructed as follows
(see \cite[Section 5]{ENO2}). Let $I: \A \to \Z(\A)$ be the right adjoint of the forgetful functor
$F: \Z(\A)\to \A$.  For a braided autoequivalence $\alpha\in \Aut^{br}(\Z(\A))$  consider
the commutative algebra $A:= \alpha^{-1}(I(\be))$ in $\Z(\A)$.  Let $\M_\alpha$ be any indecomposable
component of the category of $F(A)$-modules in $\A$. It has a structure of invertible $\A$-bimodule category 
and the assignment $\alpha \mapsto \M_\alpha$ is the inverse of \eqref{M to phiM}.
\end{remark}

Let $A$ be a finite Abelian group. Then Theorem~\ref{BrPic=Aut-br} implies that
\begin{equation}
\BrPic(\Vec_A)  \cong O(A \oplus \widehat{A},\, q), 
\end{equation}
where $O(A \oplus \widehat{A},\,q)$
is the group of automorphisms of $A  \oplus \widehat{A}$ preserving the
canonical quadratic form
\[
q(a,\,\chi) =\chi(a),\qquad a\in A,\, \chi\in \widehat{A}.
\]

For any fusion category $\A$ there is an   induction homomorphism
\begin{equation}
\label{induction from Aut}
\Delta: \Aut(\A) \to \Aut^{br}(\Z(\A)) : \alpha \mapsto \Delta_\alpha,
\end{equation}
where $\Delta_\alpha(Z,\, \gamma) = (\alpha(Z),\, \gamma^\alpha)$ and   $\gamma^\alpha$
is defined by the following commutative diagram
\begin{equation} 
 \xymatrix{
 X\otimes \alpha(Z)\ar[rr]^{\gamma^\alpha_X}\ar[d] & &  \alpha(Z)\otimes X\ar[d] \\
 \alpha(\alpha^{-1}(X)) \otimes \alpha(Z)\ar[d]_{J_{\alpha^{-1}(X),Z}} & & \alpha(Z) \otimes \alpha(\alpha^{-1}(X))\ar[d]^{J_{Z,\alpha^{-1}(X)}}\\
 \alpha(\alpha^{-1}(X)\otimes Z)\ar[rr]^{\alpha(\gamma_{\alpha^{-1}(X)})} & & \alpha(Z\otimes \alpha^{-1}(X)).
 }
\end{equation}
Here $\alpha^{-1}$ is a quasi-inverse of $\alpha$ and $J_{X,Y}: \alpha(X)\ot \alpha(Z) \xrightarrow{\sim} \alpha(X\ot Z)$
is the tensor functor structure of $\alpha$. 

\begin{example}
\label{induced bimodule}
Let $\A$ be a fusion category and let $\alpha\in \Aut(\A)$. Consider an invertible $\A$-bimodule category $\A_\alpha$,
where $\A_\alpha =\A$ and the actions of $\A$ on $\A_\alpha$ are  given by
\begin{equation}
\label{C alpha}
(X,\, V) \mapsto \alpha(X) \ot V,\qquad  (V,\,Y)\mapsto V \ot Y
\end{equation}
for all $X,\,Y\in \A$ and $V\in \A_\alpha$. Under isomorphism \eqref{main iso} this category $\A_\alpha$
corresponds to the induced autoequivalence $\Delta_\alpha$, i.e., 
\[
\Phi_{\A_\alpha} = \Delta_\alpha.
\]
\end{example}


For a finite group $G$ let $\Inn(G)\subset \Aut(G)$ denote the normal subgroup of inner 
automorphisms of $G$ and let $\Out(G) =\Aut(G)/\Inn(G)$.

\begin{proposition}
\label{kernel of induction}
The kernel of induction homomorphism  
\[
\Delta: \Aut(\Vec_G) \to \Aut^{br}(\Z(\Vec_G))
\] 
is $\Inn(G)$.
\end{proposition}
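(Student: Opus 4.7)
The plan is to prove both inclusions $\Inn(G) \subset \ker \Delta$ and $\ker \Delta \subset \Inn(G)$, leveraging the bimodule category description of $\Delta$ given in Example~\ref{induced bimodule}.

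For $\Inn(G) \subset \ker\Delta$: given $c \in G$, I set $\alpha = F_{(\Ad_c,\,1)}$. Then on any $(Z,\gamma)\in\Z(\Vec_G)$, the diagram defining $\gamma^\alpha$ yields $\Delta_\alpha(Z,\gamma)=(\delta_c\ot Z\ot\delta_{c^{-1}},\gamma^\alpha)$. I would exhibit a braided natural isomorphism $\eta\colon \id_{\Z(\Vec_G)} \xrightarrow{\sim} \Delta_\alpha$ whose component at $(Z,\gamma)$ is constructed from the half-braiding $\gamma_{\delta_c}\colon \delta_c\ot Z\to Z\ot\delta_c$, composed with the canonical unit isomorphisms. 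Verifying that $\eta_{(Z,\gamma)}$ is a morphism in the center, natural in $(Z,\gamma)$, and compatible with the braiding all reduce to the hexagon axioms for the half-braiding $\gamma$.

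For the reverse inclusion, I use Example~\ref{induced bimodule} and Theorem~\ref{BrPic=Aut-br} to translate: $\Delta_\alpha \cong \id$ is equivalent to $(\Vec_G)_\alpha\cong\Vec_G$ as $\Vec_G$-bimodule categories, i.e., as $\Vec_{G\times G^\op}$-module categories. For $\alpha = F_{(a,\zeta)}$, I would identify $(\Vec_G)_\alpha$ with $\M(L_\alpha,\mu_\alpha)$ where $L_\alpha=\{(g,a(g)^{-1})\mid g\in G\}\subset G\times G^\op$ is the stabilizer of $\delta_1$ under the $G\times G^\op$-action and $\mu_\alpha\in Z^2(L_\alpha,k^\times)$ is the cocycle read off from the tensor structure of $F_{(a,\zeta)}$. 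Under the group isomorphism $L_\alpha\cong G$ given by $(g,a(g)^{-1})\mapsto g$, the class of $\mu_\alpha$ corresponds to $\zeta$. The regular bimodule is $\M(L_{\id},1)$ with $L_{\id}=\{(g,g^{-1})\mid g\in G\}$.

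Applying the equivalence criterion for $\Vec_{G\times G^\op}$-module categories from Section~\ref{section: parameterization of BrPic}, the condition $(\Vec_G)_\alpha\cong\Vec_G$ requires $(x,y)L_\alpha(x,y)^{-1}=L_{\id}$ for some $(x,y)\in G\times G^\op$ and $\mu_\alpha^{(x,y)}$ cohomologous to the trivial cocycle. A direct calculation in $G\times G^\op$ shows the first condition is equivalent to $a=\Ad_{yx}\in\Inn(G)$; once $a$ is inner, the transported cocycle identifies with $\zeta$ under the resulting isomorphism $L_{\id}\cong G$, so the second condition then forces $\zeta$ to be cohomologically trivial. Combining the two conditions yields $\ker\Delta=\Inn(G)$ inside $H^2(G, k^\times)\rtimes\Aut(G)$. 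I expect the main technical step to be the explicit identification of $\mu_\alpha$ with $\zeta$: this requires carefully unwinding how the tensor structure of $F_{(a,\zeta)}$ enters the module-category associativity constraint on $(\Vec_G)_\alpha$ and comparing it with the standard $2$-cocycle data parameterizing indecomposable $\Vec_{G\times G^\op}$-module categories.
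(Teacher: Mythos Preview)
Your proposal is correct, and the overall strategy---translating $\Delta_\alpha\cong\id$ into the bimodule equivalence $(\Vec_G)_\alpha\cong\Vec_G$ via Example~\ref{induced bimodule} and Theorem~\ref{BrPic=Aut-br}---matches the paper's. The difference lies in how you analyze that bimodule equivalence. You pass through the $(L,\mu)$ parameterization of $\Vec_{G\times G^\op}$-module categories from Section~\ref{section: parameterization of BrPic}, computing the stabilizer $L_\alpha$ and cocycle $\mu_\alpha$, then invoking the conjugacy criterion for equivalence. The paper instead argues directly: any \emph{right} $\Vec_G$-module equivalence $(\Vec_G)_\alpha\to\Vec_G$ must be of the form $\delta_g\mapsto\delta_x\ot\delta_g$ for some fixed $x\in G$, and one then checks by hand that compatibility with the \emph{left} action forces $a=\Ad_x$ and $\zeta$ trivial. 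This bypasses the classification machinery entirely and handles both inclusions at once, making your separate treatment of $\Inn(G)\subset\ker\Delta$ via an explicit monoidal natural isomorphism unnecessary (though it is a valid independent argument). Your route is more systematic and illustrates the utility of the $(L,\mu)$ description; the paper's is shorter and more self-contained. The step you flag as the main technical point---identifying $\mu_\alpha$ with $\zeta$---is genuine but routine: only the left action contributes nontrivially to the module associativity at $\delta_1$, and that contribution is exactly $\zeta$ under the obvious isomorphism $L_\alpha\cong G$.
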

\begin{proof}
By Theorem~\ref{BrPic=Aut-br} and Example~\ref{induced bimodule}, 
the kernel of $\Delta$ consists of all autoequivalences $\alpha\in \Aut(\Vec_G)$ such that the 
$\Vec_G$-bimodule category $(\Vec_G)_\alpha$ is equivalent to the regular
$\Vec_G$-bimodule category $\Vec_G$. 

Let $\alpha = F_{(a,\zeta)}, a\in \Aut(G),\, \zeta\in H^2(G,\, k^\times)$ (
we use notation from Section~\ref{section: Fusion categories and their Brauer-Picard groups}). 
It follows from definition of $(\Vec_G)_\alpha$ (see \eqref{C alpha}) that any right $\Vec_G$-module equivalence
between $(\Vec_G)_\alpha$ and $\Vec_G$ is of the form $\delta_g \mapsto \delta_x \ot \delta_g,\, g\in G,$ for some
invertible $x\in G$.  This autoequivalence is compatible with the left
$\Vec_G$-module structure of $(\Vec_G)_\alpha$ if and only if $a$ is equal to the conjugation by $x$
and $\zeta$ is the trivial cohomology class. 
\end{proof}


Let $\C$ be a braided fusion category.  
Then  $\C$ is embedded into $\Z(\C)$ via $X\mapsto (X,\, c_{-,X})$, where $c$ denotes the braiding of $\C$.
In what follows we will identify $\C$ with a fusion subcategory of $\Z(\C)$ (the image of this embedding).
Left $\C$-module categories
can be viewed as $\C$-bimodule categories (analogously to how modules over a commutative
ring can be viewed as bimodules).  Invertible left $\C$-module categories form a subgroup
$\Pic(\C) \subset \BrPic(\C)$ called the {\em Picard} group of $\C$.  Note that the action of the group $\Aut^{br}(\C)$
on $\Pic(\C)$ factors through $\Out(\C)$.

\begin{remark}
\label{braided induction}
The restriction of the induction homomorphism \eqref{induction from Aut} to $\Aut^{br}(\C)$
is injective. 
\end{remark}

Let $\Aut^{br}(\Z(\C);\, \C) \subset \Aut^{br}(\Z(\C))$ be the subgroup consisting of braided 
autoequivalences of $\Z(\C)$ that restrict to the trivial autoequivalence of $\C$. 

The following result was established in \cite{DN}.
\begin{theorem}
\label{image of Pic}
The image of $\Pic(\C)$ under isomorphism \eqref{main iso} is  $\Aut^{br}(\Z(\C);\, \C)$.
\end{theorem}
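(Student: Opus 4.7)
The plan is to establish the two inclusions $\Phi(\Pic(\C)) \subseteq \Aut^{br}(\Z(\C);\, \C)$ and $\Aut^{br}(\Z(\C);\, \C) \subseteq \Phi(\Pic(\C))$ separately, where $\Phi$ denotes the isomorphism of Theorem~\ref{BrPic=Aut-br}. I will use the explicit description \eqref{PhiM} of $\Phi$ in the forward direction and the inverse construction of Remark~\ref{discussing inverse} in the reverse direction.

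For the forward inclusion, I would take an invertible left $\C$-module category $\M$ and view it as a $\C$-bimodule category by setting $m \ot X := X \ot m$ for $X \in \C$, $m \in \M$, with mixed associator $(Y \ot m) \ot X \xrightarrow{\sim} Y \ot (m \ot X)$ induced by the braiding $c_{Y,X}$ of $\C$ acting through the left $\C$-action. For any object $Z = (X,\, c_{-,X})$ in the image of the canonical braided embedding $\C \hookrightarrow \Z(\C)$, the left-tensor functor $Z \ot -$ and the right-tensor functor $- \ot Z$ on $\M$ both agree, as additive endofunctors of $\M$, with $X \ot -$; the half-braiding $c_{-,X}$ carried by $Z$ is precisely the datum ensuring that the identity natural transformation between them is an isomorphism of $\C$-bimodule endofunctors. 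By \eqref{PhiM} this would force $\Phi_\M(Z) \cong Z$, so $\Phi_\M$ restricts to the identity on $\C \subset \Z(\C)$. One also checks that the resulting restriction is compatible with the braided tensor structure, placing $\Phi_\M$ in $\Aut^{br}(\Z(\C);\, \C)$.

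For the reverse inclusion, given $\alpha \in \Aut^{br}(\Z(\C);\, \C)$, I would consider the bimodule $\M_\alpha$ of Remark~\ref{discussing inverse}: an indecomposable component of the category of $F(A)$-modules in $\C$ with $A := \alpha^{-1}(I(\be))$ and $I$ right adjoint to the forgetful functor $F: \Z(\C) \to \C$. Since $\alpha$ restricts trivially to $\C$ and is braided, the commutative algebra $A$ is compatible with the embedding $\C \hookrightarrow \Z(\C)$ in the sense that its half-braiding against any object of $\C$ is the braiding of $\C$ itself. The strategy is then to show that $F(A)$ inherits a canonical commutative algebra structure in $\C$, and that $\M_\alpha$, regarded as a left $\C$-module category of $F(A)$-modules, recovers the original $\C$-bimodule structure under the braiding promotion used in the forward direction. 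This would exhibit $\M_\alpha$ as an element of $\Pic(\C)$.

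The main obstacle is this descent step: showing that the triviality of $\alpha|_\C$ forces $A$ to descend to a commutative algebra in $\C \subset \Z(\C)$, and that the induced one-sided module category reconstitutes $\M_\alpha$. This is essentially a coherence argument in which the half-braiding of $A$ on arbitrary objects of $\Z(\C)$ must be compared with what the triviality of $\alpha|_\C$ and the $\alpha$-equivariance of the adjunction $F \dashv I$ dictate, ultimately reducing half-braidings with $\C$-objects to the braiding $c$ of $\C$. Once this descent is established, the bijection between invertible left $\C$-module categories and their associated $\C$-bimodules closes the argument.
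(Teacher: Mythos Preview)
The paper does not supply its own proof of this theorem; it is quoted from \cite{DN}. So there is no in-paper argument against which to benchmark your attempt. Let me nevertheless assess your sketch on its own terms.

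Your forward inclusion is correct and essentially complete. For an invertible one-sided $\C$-module $\M$ promoted to a bimodule via the braiding, and $Z=(X,c_{-,X})\in\C\subset\Z(\C)$, both bimodule endofunctors $Z\ot-$ and $-\ot Z$ act on $\M$ as $X\ot-$, and the half-braiding $c_{-,X}$ furnishes the required isomorphism of $\C$-bimodule functors; hence $\Phi_\M(Z)\cong Z$ naturally and tensorially.

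Your reverse inclusion, however, has a genuine gap at precisely the point you flag, and the specific descent claim you make is not correct. You assert that because $\alpha|_\C$ is trivial, the half-braiding of $A=\alpha^{-1}(I(\be))$ against objects of $\C$ coincides with the braiding of $\C$ --- equivalently, that $A$ lies in the image of $\C\hookrightarrow\Z(\C)$. This already fails for $\alpha=\id$: the algebra $I(\be)$ never lies in $\C$ once $\C$ is nontrivial. For non-degenerate $\C$ one has $\Z(\C)\simeq\C\bt\C^{\rev}$ and $I(\be)$ is the diagonal $\bigoplus_i X_i\bt X_i^*$, which sits in neither factor; for $\C=\Rep(G)$ the object $I(\be)$ is the Yetter--Drinfeld module $k[G]$ with its tautological $G$-grading, hence not in the trivially graded part $\Rep(G)\subset\Z(\Rep(G))$. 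Consequently there is no reason for $F(A)$ to be commutative with respect to the braiding of $\C$, and the reconstruction of $\M_\alpha$ as a braiding-promoted one-sided module does not go through along this route.

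A cleaner argument for the reverse inclusion, and the one actually used in \cite{DN}, bypasses $A$ entirely and works directly with the bimodule. If $\Phi_\M|_\C$ is isomorphic to the identity as a braided tensor functor, then \eqref{PhiM} yields, for each $X\in\C$, a natural isomorphism of $\C$-bimodule endofunctors between the left and right actions of $(X,c_{-,X})$ on $\M$. Unwinding this --- together with the monoidal coherence of the isomorphism $\Phi_\M|_\C\cong\id$, which controls how the isomorphisms for $X$ and $Y$ compose to the one for $X\ot Y$ via the braiding --- is exactly the data exhibiting the right $\C$-action on $\M$ as the left action twisted by $c$. This places $\M$ in $\Pic(\C)$.
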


The group $\Aut^{br}(\Z(\C))$ acts on the lattice of fusion subcategories of $\Z(\C)$. Let $\St(\C)$
denote the stabilizer of the subcategory $\C \subset \Z(\C)$ under this action.

\begin{proposition}
\label{stabilizer of C}
For a braided fusion category $\C$ we have
\begin{equation}
\label{Stab}
\St(\C) \cong \Pic(\C)\rtimes \Aut^{br}(\C).
\end{equation}
\end{proposition}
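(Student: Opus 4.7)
The plan is to construct a split short exact sequence
\[
1 \to \Pic(\C) \to \St(\C) \to \Aut^{br}(\C) \to 1,
\]
in which the outer terms play the roles of the kernel and the section required to exhibit the semidirect product.

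First I would define the restriction map $\rho\colon \St(\C)\to \Aut^{br}(\C)$ by sending $\phi\in\St(\C)$ to the braided autoequivalence of $\C$ obtained by restricting $\phi$ to the fusion subcategory $\C\subset \Z(\C)$; this makes sense precisely because $\phi(\C)=\C$ and because $\phi$ is braided, so the restriction automatically respects the braiding of $\C$. This is clearly a group homomorphism, and by definition its kernel is $\Aut^{br}(\Z(\C);\C)$, which by Theorem~\ref{image of Pic} is identified with $\Pic(\C)$. In particular $\Pic(\C)$ is normal in $\St(\C)$.

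Next I would produce a splitting of $\rho$ using the induction homomorphism \eqref{induction from Aut}. The composition
\[
\Aut^{br}(\C)\hookrightarrow \Aut(\C)\xrightarrow{\,\Delta\,}\Aut^{br}(\Z(\C))
\]
is injective by Remark~\ref{braided induction}. The key point is that for $\alpha\in\Aut^{br}(\C)$ the induced autoequivalence $\Delta_\alpha$ lands in $\St(\C)$ and restricts to $\alpha$ on $\C$. Indeed, an object of $\C\subset\Z(\C)$ has the form $(X,c_{-,X})$; a direct inspection of the defining diagram for $\gamma^\alpha$ shows that since $\alpha$ is braided, $\gamma^\alpha_Y=c_{Y,\alpha(X)}$, so $\Delta_\alpha(X,c_{-,X}) = (\alpha(X),c_{-,\alpha(X)})$. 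This shows $\Delta_\alpha$ preserves $\C$ and that $\rho(\Delta_\alpha)=\alpha$, so $\alpha\mapsto \Delta_\alpha$ is a splitting of $\rho$.

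Once these two ingredients are in place, the semidirect product decomposition \eqref{Stab} follows from standard group theory: every $\phi\in\St(\C)$ can be written as $\phi = (\phi\,\Delta_{\rho(\phi)^{-1}})\,\Delta_{\rho(\phi)}$, where the first factor lies in $\ker\rho\cong \Pic(\C)$ and the second in the image of the splitting $\Aut^{br}(\C)$. The main technical obstacle is the verification that $\Delta_\alpha$ restricts to $\alpha$ on the embedded copy of $\C$; this is where the assumption that $\alpha$ is \emph{braided} (rather than merely a tensor autoequivalence) is essential, since otherwise the half-braiding $\gamma^\alpha$ on $\Delta_\alpha(X,c_{-,X})$ need not coincide with the canonical half-braiding $c_{-,\alpha(X)}$ coming from the embedding $\C\hookrightarrow \Z(\C)$.
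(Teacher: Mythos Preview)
Your proposal is correct and follows essentially the same approach as the paper's proof: both identify the kernel of the restriction map $\St(\C)\to\Aut^{br}(\C)$ with $\Pic(\C)$ via Theorem~\ref{image of Pic}, and both use the induction homomorphism $\Delta$ restricted to $\Aut^{br}(\C)$ as the splitting, concluding with the standard $NH$ decomposition. Your write-up is a bit more explicit than the paper's in verifying that $\Delta_\alpha$ actually restricts to $\alpha$ on the embedded copy of $\C$ (the half-braiding computation), but this is a refinement of presentation rather than a different argument.
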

\begin{proof}
Observe that the subgroup $N:=\Aut^{br}(\Z(\C);\, \C)$ is normal in $\St(\C)$ and $N \cong \Pic(\C)$
by Theorem~\ref{image of Pic}.
Since the image of a braided autoequivalence of $\C$ under the induction homomorphism \eqref{induction from Aut}
belongs to $\St(\C)$ we  see from Remark~\ref{braided induction}  that $\St(\C)$ 
contains a subgroup $H \cong  \Aut^{br}(\C)$. 

Any $\alpha\in \St(\C)$ restricts to a braided autoequivalence $\tilde\alpha$ of $\C$. 
Let $\beta=\Delta_{\tilde\alpha}\in H$ be the element of $\Aut^{br}(\Z(\C))$ induced from $\tilde\alpha$.
The restriction of $\beta$ on $\C$ is $\tilde\alpha$, hence,  $\alpha \circ \beta^{-1} \in N$
restricts to a trivial autoequivalence of  $\C$.  This proves $\St(\C) = NH$, i.e., $\St(\C)$
is the semi-direct product of $N$ and $H$. 
\end{proof}

We can apply Proposition~\ref{stabilizer of C} to centers of Tannakian categories.
Let $G$ be a finite group.

\begin{corollary}
\label{found StRepG}
We have
\begin{equation}
\label{StRepG iso}
\St(\Rep(G)) \cong H^2(G,\,k^\times) \rtimes \Out(G)
\end{equation}
\end{corollary}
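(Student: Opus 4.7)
The plan is to apply Proposition~\ref{stabilizer of C} directly to the Tannakian category $\C = \Rep(G)$, which immediately yields a semidirect product decomposition
\[
\St(\Rep(G)) \cong \Pic(\Rep(G)) \rtimes \Aut^{br}(\Rep(G)).
\]
Proposition~\ref{Aut-br=Out} identifies the second factor with $\Out(G)$, so the proof reduces to identifying $\Pic(\Rep(G)) \cong H^2(G,\,k^\times)$ as $\Out(G)$-groups.

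To compute $\Pic(\Rep(G))$, I would use Theorem~\ref{image of Pic}, which translates the problem into describing $\Aut^{br}(\Z(\Rep(G));\,\Rep(G))$, the group of braided autoequivalences of the center that restrict to the identity on the canonical Tannakian subcategory $\Rep(G)\subset \Z(\Vec_G)$. Via Remark~\ref{discussing inverse}, each such autoequivalence $\alpha$ corresponds (up to Morita equivalence) to an indecomposable invertible $\Rep(G)$-module category $\M_\alpha$, realized as a category of modules over the commutative algebra $\alpha^{-1}(I(\mathbf{1}))$ in $\Z(\Rep(G))$. Such module categories can be described as categories of projective $G$-representations $\Rep(G,\,\psi)$ twisted by a class $\psi \in H^2(G,\,k^\times)$, and the tensor product of two such categories over $\Rep(G)$ corresponds to the sum of the cohomology classes.

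To make this identification rigorous, I would explicitly construct the homomorphism $H^2(G,\,k^\times)\to \Pic(\Rep(G))$ by sending $\psi$ to $\Rep(G,\,\psi)$, then argue (a) well-definedness using the conjugation equivalence of module categories from Section~\ref{section: parameterization of BrPic} (cohomologous cocycles give equivalent module categories), (b) invertibility by showing the dual category of $\Rep(G,\,\psi)$ is again $\Rep(G)$, and (c) that this map exhausts $\Pic(\Rep(G))$ because any invertible $\Rep(G)$-module category has underlying module category of full rank (analogous to condition (i) of Proposition~\ref{M(L,mu) inv}). Finally I would verify that the induction $\Delta$ of an outer automorphism $\theta \in \Out(G)$ acts on $\Rep(G,\,\psi)$ by sending it to $\Rep(G,\,\psi^\theta)$, matching the natural action of $\Out(G)$ on $H^2(G,\,k^\times)$ defined in Section~\ref{cohomology section}.

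The main obstacle I expect is step (c), classifying all invertible $\Rep(G)$-module categories, since the general classification of $\Rep(G)$-module categories by Ostrik is in terms of pairs $(H,\psi)$ with $H \subset G$, and one must rule out proper subgroups $H \subsetneq G$ on the grounds that the dual fusion category $\Rep(G)^*_{\M(H,\psi)}$ fails to be $\Rep(G)$ in that case. This is likely handled most efficiently via Frobenius-Perron dimension constraints forcing $|G/H|^2 = 1$, or by checking on the level of the center that any $\alpha \in \Aut^{br}(\Z(\Vec_G);\Rep(G))$ must preserve the decomposition of the regular object of $\Rep(G)$ and hence correspond to $H=G$.
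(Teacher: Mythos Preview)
Your proposal is correct and follows the same structural route as the paper: apply Proposition~\ref{stabilizer of C} to $\C=\Rep(G)$, identify $\Aut^{br}(\Rep(G))\cong\Out(G)$ via Proposition~\ref{Aut-br=Out}, and identify $\Pic(\Rep(G))\cong H^2(G,\,k^\times)$. The only difference is in this last step: the paper simply cites \cite{Gr} for the isomorphism $\Pic(\Rep(G))\cong H^2(G,\,k^\times)$, whereas you propose to rederive it via the classification of $\Rep(G)$-module categories as $\Rep(G,\psi)$ and an invertibility argument. Your sketch for that derivation is sound (and the dimension argument you suggest for ruling out proper $H\subsetneq G$ does work), though the detour through Theorem~\ref{image of Pic} and Remark~\ref{discussing inverse} is unnecessary once you work directly with module categories. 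In short, you are filling in a proof that the paper outsources to a reference; otherwise the arguments coincide.
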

\begin{proof}
It was shown in \cite{Gr} that $\Pic(\Rep(G)) \cong H^2(G,\,k^\times)$. Combining this 
with  Proposition~\ref{Aut-br=Out} we get the result.
\end{proof}

Let us describe the above stabilizer $\St(\Rep(G))$ in terms convenient for computations.
Note that $\Z(\Rep(G)) \cong \Z(\Vec_G)$ and so we can consider the induction homomorphism
\[
\Delta :\Aut(\Vec_G) \to  \Aut^{br}(\Z(\Rep(G)). 
\]

\begin{lemma}
\label{parameterization of St}
$\St(\Rep(G)) = \Delta (\Aut(\Vec_G))$.
\end{lemma}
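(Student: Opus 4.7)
The plan is to establish the equality by two inclusions: first showing $\Delta(\Aut(\Vec_G)) \subseteq \St(\Rep(G))$ directly from the definition of $\Delta$, and then checking that both sides have the same cardinality using results already in hand.

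For the inclusion, I would first spell out how $\Rep(G)$ sits inside $\Z(\Vec_G)$ under the canonical equivalence $\Z(\Rep(G)) \simeq \Z(\Vec_G)$: an object of $\Rep(G) \subset \Z(\Vec_G)$ is a pair $(V,\gamma)$ with $V$ concentrated in the unit-graded component (i.e.\ $V = V_1 \ot \delta_1$), the half-braiding $\gamma$ encoding a $G$-action on $V_1$. Now given any $\alpha = F_{(a,\zeta)} \in \Aut(\Vec_G)$, the underlying functor satisfies $\alpha(\delta_g) = \delta_{a(g)}$; since $a(1)=1$, it preserves the trivially-graded component. Inspecting the definition of $\Delta_\alpha$ (which sends $(Z,\gamma)$ to $(\alpha(Z),\gamma^\alpha)$), this implies $\Delta_\alpha$ carries $\Rep(G)$ into itself, so $\Delta_\alpha \in \St(\Rep(G))$.

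For the reverse inclusion, my plan is a cardinality count rather than an explicit surjection. By Proposition~\ref{kernel of induction} we have
\[
|\Delta(\Aut(\Vec_G))| = \frac{|\Aut(\Vec_G)|}{|\Inn(G)|},
\]
and by Proposition~\ref{Aut(VecG)} the numerator equals $|H^2(G,k^\times)| \cdot |\Aut(G)|$. Since $|\Aut(G)|/|\Inn(G)| = |\Out(G)|$, we obtain $|\Delta(\Aut(\Vec_G))| = |H^2(G,k^\times)| \cdot |\Out(G)|$. On the other hand, Corollary~\ref{found StRepG} gives $|\St(\Rep(G))| = |H^2(G,k^\times)| \cdot |\Out(G)|$. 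Hence the two groups coincide.

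The main conceptual point (rather than an obstacle) is making sure the embedding $\Rep(G) \hookrightarrow \Z(\Vec_G)$ used in defining $\St(\Rep(G))$ matches the description in terms of objects concentrated in the trivial grading; once this identification is verified the inclusion is essentially automatic, and the proof reduces to the order comparison above. If one preferred an invariant-theoretic argument in place of counting, the alternative would be to observe that $F_{(\id,\zeta)}$ and $F_{(a,1)}$ induce, respectively, the $\Pic(\Rep(G)) \cong H^2(G,k^\times)$ and the $\Aut^{br}(\Rep(G)) \cong \Out(G)$ factors in the decomposition of Proposition~\ref{stabilizer of C}, giving surjectivity directly; but the counting argument is shorter and uses only facts already recorded in the paper.
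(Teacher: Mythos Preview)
Your proposal is correct and follows essentially the same approach as the paper's proof, which simply cites Propositions~\ref{Aut(VecG)} and~\ref{kernel of induction} (with Corollary~\ref{found StRepG} in immediate context). You have spelled out in more detail both the inclusion $\Delta(\Aut(\Vec_G))\subseteq \St(\Rep(G))$ and the cardinality comparison that the paper leaves implicit, but the underlying argument is the same.
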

\begin{proof}
This follows from Propositions~\ref{Aut(VecG)}  and~\ref{kernel of induction}.
\end{proof}

For $a\in \Out(G)$ and $\zeta \in H^2(G,\, k^\times)$  let 
\begin{equation}
\label{dima}
\Delta_{(a,\,\zeta)}\in  \Aut^{br}(\Z(\Rep(G))) \cong \Aut^{br}(\Z(\Vec_G))
\end{equation}
denote the braided  autoequivalence induced from the tensor autoequivalence 
$F_{(a,\zeta)}$  of $\Vec_G$ introduced in Section~\ref{section: Fusion categories and their Brauer-Picard groups}. 
By Lemma~\ref{parameterization of St}, $\Delta_{(a,\,\zeta)}\in \St(\Rep(G))$. 

\begin{example}
\label{inductionexample}
We can compute the effect of  autoequivalence $\Delta_{a,\,\zeta}$ on objects of $\Z(\Vec_G)$.
Recall that objects of $\Z(\Vec_G)$ can be viewed as $G$-equivariant vector bundles of $G$, i.e., $G$-graded
vector spaces 
\[
V  =\bigoplus_{g\in G}\, V_g
\]
along with linear isomorphisms $\gamma(x,\,g): V_g \to V_{xgx^{-1}}, x,g\in G,$ satisfying  
\begin{equation}
\label{gamma}
\gamma(xy,\, g) = \gamma(x,\, ygy^{-1}) \circ \gamma(y,\,g),\qquad g,\,x,\,y\in  G.
\end{equation}
In particular, simple objects of $\Z(\Vec_G)$ are parameterized by pairs $(K,\,\chi)$, where $K$
is a conjugacy class of $G$ and $\chi$ is an irreducible character of the centralizer $C_G(g)$
of an element $g\in K$. Let $Z_{(K,\,\chi)}$ denote the corresponding simple object.

Let us denote
$\bigoplus_{g\epsilon G}\,\left( V_g,\,\{\gamma(x,\,g)\}_{x,g\in G}\right)$ a typical object in $\Z(\Vec_G)$.

We have 
\begin{equation}
\label{effect of induction}
\Delta_{(a,\zeta)}\left (\bigoplus_{g\in G}\,V_g,\{\gamma(x,\,g)\}_{x,g\in G} \right)
=\left( \bigoplus_{g\in G}V_{a^{-1}(g)},\{\widetilde{\gamma(x,\,g)}\}_{x,g\in G} \right),
 \end{equation}
where 
\begin{equation*}
\widetilde{\gamma(x,\,g)}=\gamma(a^{-1}(x),\,a^{-1}(g))\,\frac{\zeta(a^{-1}(x),\,a^{-1}(g))}{\zeta(a^{-1}(g),\,a^{-1}(x))}.
\end{equation*}
In particular, $\Delta_{(a,\zeta)}(Z_{(K,\,\chi)}) = Z_{(a(K),\,(\chi\circ a^{-1})\,\rho^g_a)}$, where $\rho^g_a,\, g\in a(K),$ is the linear
character of $C_G(g)$ given by 
\[
\rho^g_a(x) = \frac{\zeta(a^{-1}(x),\,a^{-1}(g))}{\zeta(a^{-1}(g),\,a^{-1}(x))}.
\]
\end{example}

\section{Action on the categorical Lagrangian Grassmannian}
\label{action on Grassmannian}

Let $\C$ be a non-degenerate braided fusion category.

\begin{definition}
A fusion subcategory $\D \subset \C$ is called {\em Lagrangian} if $\D$ is Tannakian and $\D =\D'$.
\end{definition}

It was shown in \cite{DGNO} that $\C$ contains a Lagrangian subcategory if and only if 
$\C$ is braided equivalent to the center of a pointed fusion category. 

Lagrangian subcategories of $\Z(\Vec_G)$ were classified in \cite{NN}.  
They are paramete\-ri\-zed by pairs $(N,\,\mu)$
where $N$ is a normal Abelian subgroup of $G$ and 
$\mu$ is a $G$-invariant cohomology class in $H^2(N,\,k^\times)$.
The Lagrangian subcategory $\mathcal{L}_{(N, \mu)}$ corresponding to the pair $(N,\, \mu)$
is identified with the subcategory of $G$-equivariant bundles $V=\oplus_{a\in N}\, V_a$ supported on $N$
whose $G$-equivariant structure \eqref{gamma} satisfies
\[
\gamma(x,\,a) = \frac{\mu(a,\,x)}{\mu(x,\,a)}\id_{V_a} 
\]
for all $a,\,x\in N$. 

\begin{example}
\label{canonical subcategory}
The canonical subcategory $\Rep(G) \subset \Z(\Vec_G)$ consisting of vector bundles supported
on the identity element of $G$ is $\mathcal{L}_{(1, 1)}$.
\end{example}


We have $\mathcal{L}_{(N, \mu)} \cong \Rep(G_{(N, \mu)})$ for some group $G_{(N, \mu)}$ such that $|G_{(N, \mu)}|=|G|$. 
The group $G_{(N, \mu)}$  is not isomorphic to $G$ in general. It can be  described as follows (see \cite{N} for details).
There exists a canonical homomorphism
\begin{equation}
\label{imageofmu}
H^2(N,\,k^{\times})^G \to H^2(G/N,\,\widehat{N}).
\end{equation}
Let $\nu \in H^2(G/N,\,\widehat{N})$
be the image of $\mu$ under this homomorphism. Then $G_{(N, \mu)}$ is an extension 
\[
1\to \widehat{N} \to G_{(N, \mu)} \to G/N \to 1
\]
corresponding to $\nu$.

\begin{remark}
\label{about GNB}
\begin{enumerate}
\item If $\mu \in H^2(N,\,k^{\times})^G$ is trivial then  $G_{(N, \mu)}$ is isomorphic to the semidirect product  $ \widehat{N} \rtimes G/N$.  
\item For non-degenerate $\mu$ the group $G_{(N, \mu)}$ first appeared in \cite{Da1}.
\end{enumerate}
\end{remark}

\begin{definition}
\label{Lagrangian Grassmannian}
Let $\C$ be a non-degenerate braided fusion category.
The set of Lagrangian subcategories of $\C$  will be called the {\em categorical Lagrangian Grassmannian} of $\C$.
\end{definition}

Let   $\mathbb{L}(G)$ denote the categorical Lagrangian Grassmannian of $\Z(\Vec_G)$.
Let 
\begin{equation}
\mathbb{L}_0(G):=\{ \L \in  \mathbb{L}(G) \mid \L \cong \Rep(G) \ \mbox{as a braided fusion category}\}.
\end{equation}
This set is non-empty since it contains the canonical subcategory $\Rep(G)\subset \Z(\Vec_G)$,
see Example~\ref{canonical subcategory}.

To simplify notation in what follows we will denote 
\begin{equation}
\label{Aa}
\Aa(G):= \Aut^{br}(\Z(\Vec_G)).  
\end{equation}
By Theorem~\ref{BrPic=Aut-br} we have 
\begin{equation}
\Aa(G)\cong \BrPic(\Vec_G) =\BrPic(\Rep(G)).
\end{equation}
Clearly, the group $\Aa(G)$ acts on the set $\mathbb{L}(G)$ and leaves the subset $\mathbb{L}_0(G)$ invariant. 

Let us denote $\C= \Z(\Vec_G)$. 

\begin{remark}
\label{trivial omega}
For every category $\mathcal{L} \in \mathbb{L}_0(G)$ the  algebra 
\begin{equation}
\label{AL}
A_\L := \Fun(G,\,k^\times)\in \mathcal{L}\cong \Rep(G)
\end{equation}
is commutative and separable (i.e., is an {\em \'etale} algebra in terminology of \cite{DMNO}) and
the fusion category $\C_{A_\L}$ of $A_\L$-modules in $\C$ is equivalent to $\Vec_G$. 
Indeed, by \cite{DGNO}, this category is pointed and has a faithful $G$-grading. Hence, $\C_{A_\L}$ is equivalent
to $\Vec_G^\omega$ for some $\omega\in Z^3(G,\, k^\times)$.  By \cite{DMNO} $\Z(\C_{A_\L}) \cong  \Z(\Vec_G)$
and, hence, $\C_{A_\L}$ and $\Vec_G$ are categorically Morita equivalent \cite{ENO2}.  This implies
that  $\omega$ is cohomologically trivial.
\end{remark} 

\begin{proposition}
\label{transitive action}
The action of $\Aa(G)$ on $\mathbb{L}_0(G)$ is transitive.
\end{proposition}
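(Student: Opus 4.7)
The plan is to exhibit each $\L \in \mathbb{L}_0(G)$ as arising from a ``second'' presentation of $\C := \Z(\Vec_G)$ as the Drinfeld center of $\Vec_G$ and compare it with the canonical one. Given $\L \in \mathbb{L}_0(G)$, Remark~\ref{trivial omega} already supplies a commutative \'etale algebra $A_\L \in \L \subset \C$ such that $\C_{A_\L} \cong \Vec_G$ as a fusion category. Because $\L$ is Lagrangian, $A_\L$ is itself a Lagrangian algebra in $\C$ in the sense of \cite{DMNO}, i.e., the category of local modules $\C_{A_\L}^{\text{loc}}$ is equivalent to $\Vec$.

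I would next apply the main theorem of \cite{DMNO} to obtain a canonical braided equivalence $F_\L : \Z(\C_{A_\L}) \xrightarrow{\sim} \C$ under which the canonical Tannakian Lagrangian subcategory of $\Z(\C_{A_\L})$ (the one coming from the pointed presentation $\C_{A_\L} \cong \Vec_G$) is identified with the Lagrangian $\L \subset \C$ containing $A_\L$. Separately, any tensor equivalence $\epsilon : \C_{A_\L} \xrightarrow{\sim} \Vec_G$ afforded by Remark~\ref{trivial omega} induces a braided equivalence $\Z(\epsilon) : \Z(\C_{A_\L}) \xrightarrow{\sim} \Z(\Vec_G) = \C$ sending this same canonical Tannakian Lagrangian of $\Z(\C_{A_\L})$ to $\Rep(G) \subset \C$. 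The composite $\alpha := F_\L \circ \Z(\epsilon)^{-1}$ is then a braided autoequivalence of $\C$, i.e., an element of $\Aa(G)$, satisfying $\alpha(\Rep(G)) = \L$; this establishes transitivity.

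The main obstacle is justifying that $F_\L$ takes the canonical Tannakian Lagrangian of $\Z(\C_{A_\L})$ precisely to $\L$; the rest of the argument is bookkeeping. This identification is built into the DMNO construction: $\L$ is recovered as the M\"uger centralizer of $A_\L$ inside $\C$ (since $A_\L$ contains every simple object of $\L \cong \Rep(G)$ and $\L = \L'$ by Lagrangianity), and the DMNO equivalence is set up so that this centralizer matches the canonical Lagrangian on the $\Z(\C_{A_\L})$ side.
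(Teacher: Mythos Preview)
Your argument is correct and is essentially the paper's own proof, specialized to the case $\L_1 = \Rep(G)$: both construct the desired braided autoequivalence as a composite of the DMNO equivalence $\C \simeq \Z(\C_{A_\L})$ with an equivalence induced by a tensor equivalence $\C_{A_\L} \simeq \Vec_G$. The only cosmetic difference is that the paper verifies $\alpha(\L_1) = \L_2$ by tracking the regular algebras (showing $\alpha(A_1) \cong A_2$ and using that $A_i$ generates $\L_i$), whereas you phrase the same identification in terms of the canonical Lagrangian of $\Z(\C_{A_\L})$ matching $\L$ under the DMNO equivalence.
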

\begin{proof}
Let $\mathcal{L}_1,\,\mathcal{L}_2\in \mathbb{L}_0(G)$  be Lagrangian subcategories of  $\C$ 
and let $A_1$ and $A_2$ be the corresponding \'etale algebras in $\C$ defined in \eqref{AL}. 
By Remark~\ref{trivial omega}
$\C$-module categories 
$\C_{A_1}$ and $\C_{A_2}$ are equivalent to $\Vec_G$.  Pick a tensor  equivalence 
\[
\phi: 
\C_{A_1} \xrightarrow{\sim} \C_{A_2}.  
\]
It follows from the results of \cite{DMNO} that
there are braided equivalences 
\[
\Phi_i : \C \xrightarrow{\sim} \Z(\C_{A_i}),\, i=1,2. 
\]
Let $ \alpha:= \Phi_2^{-1} \circ  \Delta_\phi \circ \Phi_1 \in \Aut^{br}(\C)$, where $\Delta_\phi: 
\Z(\C_{A_1}) \to \Z(\C_{A_2})$
is the braided equivalence induced from $\phi$. 
Then $\alpha(A_1) \cong  A_2$. 
Note that $A_i$
is isomorphic, as an object of $\C$, to the regular object in $\L_i,\, i=1,2$. Hence, $\alpha(\L_1) =\L_2$, 
which proves the statement. 
\end{proof}
 
Thus, the image of $\Aa(G)$ is a transitive subgroup of $\Sym(\mathbb{L}_0(G))$. 
Let 
\begin{equation} 
\Aa_0(G):= \St(\Rep(G))
\end{equation}
 denote the stablizer of  the canonical
Lagrangian subcategory $\Rep(G) \subset \C$ in $\Aa(G)$. 
By Corollary~\ref{found StRepG},
\begin{equation}
\label{A)G}
\Aa_0(G) \cong H^2(G,\,k^\times) \rtimes \Out(G).
\end{equation}
Since the cardinality of a transitive set is equal to the index of the stabilizer of a point, 
we have
\begin{equation}
\label{index of A0G}
[\Aa(G): \Aa_0(G)] = |\mathbb{L}_0(G)|.
\end{equation}

The next Corollary allows to find the order of the Brauer-Picard group.

\begin{corollary}
\label{orderofBrPic}
Let $G$ be a finite group.  Then
\[
|\Aa(G)| = |H^2(G,\,k^\times)| \cdot |\Out(G)| \cdot |\mathbb{L}_0(G)|.
\]
\end{corollary}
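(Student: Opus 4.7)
The plan is to combine the orbit-stabilizer theorem with the two key facts already established just above the corollary. Specifically, I would apply the orbit-stabilizer relation
\[
|\Aa(G)| = [\Aa(G):\Aa_0(G)] \cdot |\Aa_0(G)|
\]
for the action of $\Aa(G)$ on $\mathbb{L}_0(G)$ with basepoint the canonical Lagrangian subcategory $\Rep(G) \subset \Z(\Vec_G)$.

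First I would invoke Proposition~\ref{transitive action}, which says the action of $\Aa(G)$ on $\mathbb{L}_0(G)$ is transitive, so that the orbit of $\Rep(G)$ is all of $\mathbb{L}_0(G)$. By definition $\Aa_0(G)$ is the stabilizer of $\Rep(G)$, so equation \eqref{index of A0G} gives $[\Aa(G):\Aa_0(G)] = |\mathbb{L}_0(G)|$. Next I would substitute in the computation of the stabilizer from Corollary~\ref{found StRepG}, which was reformulated in \eqref{A)G} as
\[
\Aa_0(G) \cong H^2(G,\,k^\times) \rtimes \Out(G),
\]
giving $|\Aa_0(G)| = |H^2(G,\,k^\times)| \cdot |\Out(G)|$. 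Multiplying the two factors yields the claimed formula.

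Since both ingredients — transitivity and the structure of the point stabilizer — are already in place, there is no real obstacle; the proof is essentially a one-line invocation of orbit-stabilizer. The only thing worth mentioning for the reader is that $\mathbb{L}_0(G)$ (rather than all of $\mathbb{L}(G)$) is the right orbit to use, because the action of $\Aa(G)$ on the full Lagrangian Grassmannian need not be transitive, whereas Proposition~\ref{transitive action} guarantees transitivity precisely on the subset of Lagrangian subcategories braided equivalent to $\Rep(G)$.
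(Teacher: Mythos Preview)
Your proposal is correct and matches the paper's approach exactly: the corollary is stated without proof because it follows immediately from equations \eqref{A)G} and \eqref{index of A0G} via orbit-stabilizer, precisely as you outline.
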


\begin{corollary}
\label{L(G) is trivial}
Let $G$  be a finite group without normal Abelian subgroups (e.g., a simple non-Abelian group). Then
\[
\Aa(G) \cong H^2(G,\,k^\times) \rtimes \Out(G). 
\]
\end{corollary}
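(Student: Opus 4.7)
The plan is to show that the hypothesis forces the categorical Lagrangian Grassmannian $\mathbb{L}(G)$ to collapse to a single point, namely the canonical Tannakian subcategory $\Rep(G)$, and then read off the group from the stabilizer description already established in \eqref{A)G}.

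First I would invoke the classification from \cite{NN} recalled above: every Lagrangian subcategory of $\Z(\Vec_G)$ has the form $\L_{(N,\,\mu)}$ for a normal Abelian subgroup $N \subset G$ and a $G$-invariant class $\mu \in H^2(N,\,k^\times)$. Under the hypothesis that $G$ has no nontrivial normal Abelian subgroups, the only allowed choice of $N$ is the trivial subgroup $\{1\}$, and then $H^2(\{1\},\, k^\times) = 0$ forces $\mu$ to be trivial as well. Hence
\[
\mathbb{L}(G) \;=\; \{\,\Rep(G)\,\} \;=\; \mathbb{L}_0(G),
\]
where the second equality uses Example~\ref{canonical subcategory}.

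Next I would apply \eqref{index of A0G}, which gives $[\Aa(G):\Aa_0(G)] = |\mathbb{L}_0(G)| = 1$. Therefore $\Aa(G) = \Aa_0(G)$, and the identification $\Aa_0(G) \cong H^2(G,\,k^\times) \rtimes \Out(G)$ from \eqref{A)G} (which in turn comes from Corollary~\ref{found StRepG}) delivers the claimed isomorphism.

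There is no real obstacle: the only thing to check carefully is that ``no normal Abelian subgroups'' in the statement is meant in the nontrivial sense (the trivial subgroup being vacuously excluded as relevant, since it contributes only the canonical $\Rep(G)$). The parenthetical remark about non-Abelian simple groups is then immediate, since a non-Abelian simple group has no proper nontrivial normal subgroups at all.
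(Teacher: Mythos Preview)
Your argument is correct and is precisely the reasoning the paper intends: the hypothesis forces $\mathbb{L}(G)=\{\L_{(1,1)}\}=\mathbb{L}_0(G)$ by the \cite{NN} classification, so \eqref{index of A0G} gives $\Aa(G)=\Aa_0(G)$, and \eqref{A)G} finishes. The paper leaves this corollary unproved because it follows immediately from Corollary~\ref{orderofBrPic} in exactly the way you describe.
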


\begin{corollary}
Let $G$ be a finite non-Abelian simple group. Then the  Brauer-Picard group of $\Vec_G$ is solvable.
\end{corollary}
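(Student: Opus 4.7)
The plan is to combine Corollary~\ref{L(G) is trivial} with Schreier's conjecture. Since $G$ is a non-Abelian simple group, it has no non-trivial normal Abelian subgroup, so Corollary~\ref{L(G) is trivial} applies and gives
\[
\BrPic(\Vec_G) \cong \Aa(G) \cong H^2(G,\,k^\times) \rtimes \Out(G).
\]
Thus it suffices to exhibit $\Aa(G)$ as an extension of one solvable group by another.

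First I would observe that $H^2(G,\,k^\times)$ is an Abelian group (it is nothing but the Schur multiplier of $G$, with values in the trivial $G$-module $k^\times$), hence solvable. This handles the normal subgroup in the semidirect product.

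Second, I would invoke the Schreier conjecture: for every finite non-Abelian simple group $G$, the outer automorphism group $\Out(G)$ is solvable. This is known to hold for all finite simple groups as a consequence of the classification of finite simple groups (inspection of the simple groups of Lie type, the alternating groups $A_n$ whose outer automorphism group has order $2$ for $n\neq 6$, the case $A_6$ where $\Out(A_6)\cong (\mathbb{Z}/2\mathbb{Z})^2$, and the sporadic groups where $\Out$ always has order $1$ or $2$).

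The conclusion then follows: $\Aa(G)$ is an extension of the solvable group $\Out(G)$ by the Abelian group $H^2(G,\,k^\times)$, and any extension of a solvable group by a solvable group is solvable. The only non-routine ingredient is the invocation of the Schreier conjecture; everything else is a direct unpacking of the preceding corollary.
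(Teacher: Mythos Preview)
Your proof is correct and follows exactly the same approach as the paper: apply Corollary~\ref{L(G) is trivial} to obtain $\Aa(G)\cong H^2(G,k^\times)\rtimes\Out(G)$, then invoke the Schreier conjecture (verified via the classification of finite simple groups) to conclude that this extension of a solvable group by an Abelian group is solvable. The paper's own proof is simply a one-line citation of Schreier; your version just makes the easy steps explicit.
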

\begin{proof}
This is a consequence of the Schreier conjecture \cite[p.133]{DM} stating that $\Out(G)$ is solvable 
(this conjecture is verified using the classification of finite simple groups). 
\end{proof}

\begin{remark}
It can happen that the group $\Aa(G)$ is trivial. This is the case  for every simple group $G$
such that both $\Out(G)$ and $H^2(G,\,k^\times)$ are trivial.  Among the groups that have these properties
are the Mathieu group $M_{11}$ and the Fischer-Griess Monster group. 

On the other hand, if $G$ is a $p$-group of order $> 2$ then $\Aa(G)$ is non-trivial, since 
in this case the group $\Out(G)$ is non-trivial \cite{Ga}.
\end{remark}

The next Proposition describes the action of $\Aa_0(G)$ on $\mathbb{L}(G)$.

By Lemma~\ref{parameterization of St}, 
elements of $\Aa_0(G)$ are precisely braided autoequivalences induced from $\Aut(\Vec_G)$  
and so are of the form $\Delta_{(a,\,\zeta)}$ for some 
$a\in \Out(G)$ and $\zeta \in H^2(G,\,k^\times)$, see \eqref{dima}.

\begin{proposition}
\label{action of A0G on L}
Let $\mathcal{L}_{(N,\,\mu)}$ be a Lagrangian subcategory of $\Z(\Vec_G)$. Then
\[
\Delta_{(a,\,\zeta)} (\L_{(N,\,\mu)}) = \L_{(a(N),\,\mu^a \zeta^{a})}.
\]
\end{proposition}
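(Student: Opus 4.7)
The approach is direct computation. Both sides are Lagrangian subcategories of $\Z(\Vec_G)$ parameterized as in \cite{NN} by a normal abelian subgroup together with a $G$-invariant class in $H^2$, so it suffices to verify that $\Delta_{(a,\zeta)}(\L_{(N,\mu)})$ is supported on $a(N)$ and that the resulting equivariant structure is the one described by the class $\mu^a \zeta^a$.

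First, I would check that the right-hand side is well-defined. Since $a\in\Aut(G)$, the image $a(N)$ is again a normal abelian subgroup of $G$. The class $\mu^a$ is $G$-invariant because $G$-invariance of $\mu$ is transported by $a$, and $\zeta^a|_{a(N)}$ is automatically $G$-invariant because the conjugation action of $G$ on $H^2$ of any normal subgroup (obtained by restriction from $H^2(G,\,k^\times)$) is trivial.

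The core of the proof is then the following calculation. Take a generic object $V=\bigoplus_{b\in N} V_b\in\L_{(N,\mu)}$ whose equivariant structure is $\gamma(y,b) = \mu(b,y)\mu(y,b)^{-1}\,\id_{V_b}$ for $y,b\in N$. By Example~\ref{inductionexample}, $\Delta_{(a,\zeta)}(V) = \bigoplus_g V_{a^{-1}(g)}$, which is supported precisely on $a(N)$, and for $x,g\in a(N)$ the new equivariant structure is
$\tilde\gamma(x,g) = \gamma(a^{-1}(x),\,a^{-1}(g))\cdot\frac{\zeta(a^{-1}(x),\,a^{-1}(g))}{\zeta(a^{-1}(g),\,a^{-1}(x))}$.
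The clean form of Example~\ref{inductionexample} applies here thanks to $N$ being abelian: with $b=a^{-1}(g)$ and $y=a^{-1}(x)$ in $N$, the elements $b$ and $y$ commute, so $xgx^{-1}=g$ inside $a(N)$. Substituting $\gamma(y,b)=\mu(b,y)\mu(y,b)^{-1}\id$ and rewriting in terms of $\mu^a,\,\zeta^a\in Z^2(a(N),\,k^\times)$ gives a ratio of the form $(\mu^a\zeta^a)(g,x)/(\mu^a\zeta^a)(x,g)$, which is exactly the equivariant structure prescribed by the cohomology class $\mu^a\zeta^a$ in the parameterization of Lagrangian subcategories.

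The step most likely to require care is the final algebraic reconciliation: one must track the multiplicative cocycle bookkeeping, verifying that the $\zeta$-twist produced by the tensor structure of $F_{(a,\zeta)}$ combines with $\mu^a$ to yield precisely the class $\mu^a\zeta^a$ on $a(N)$ (up to coboundary). No additional machinery beyond Example~\ref{inductionexample} and the description of $\L_{(N,\mu)}$ is required.
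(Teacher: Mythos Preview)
Your proposal is correct and follows essentially the same approach as the paper: apply Example~\ref{inductionexample} to an arbitrary object of $\L_{(N,\mu)}$, read off the support $a(N)$ and the new equivariant structure $\tilde\gamma$, and match the resulting alternating ratio with that of $\mu^a\zeta^a$. The paper's proof is terser (it omits your well-definedness check and the remark about abelianness forcing $xgx^{-1}=g$), but the computation is identical.
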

\begin{proof}
Let us apply $\Delta_{(a,\,\zeta)}$ to an object 
$\left(\bigoplus_{g\in N}V_g,\{\gamma(x,\,g)\}\right)$ of $\mathcal{L}_{(N,\,\mu)}$. 
Using Example~\ref{inductionexample}, we obtain
\[
\Delta_{(a,\zeta)}\left (\bigoplus_{g\in G}\,V_g,\{\gamma(x,\,g)\}_{x,g\in G} \right)
=\left( \bigoplus_{g\in G}V_{a(g)},\{\widetilde{\gamma(x,\,g)}\}_{x,g\in G} \right),
\]
where 
 \begin{equation}
 \widetilde{\gamma(x,\,g)}=\gamma(a^{-1}(x),\,a^{-1}(g)) \,
 \frac{\zeta^a(x,\,g)} {\zeta^a(g,\,x)} =  \frac{\mu^a(x,\,g)} {\mu^a(g,\,x)} \, \frac{\zeta^a(x,\,g)} {\zeta^a(g,\,x)}
\id_{V_g},
 \end{equation}
 for all $x,\,g\in N$, which implies the result.
\end{proof}

\begin{proposition}
\label{M(L,mu) on L}
Let $L$ be a subgroup of $G\times G^\op$ and let $\mu$ be a $2$-cocycle in $Z^2(L,\,k^\times)$
satisfying conditions of Proposition~\ref{M(L,mu) inv}.  Let $\M(L,\,\mu)$ denote the corresponding element
of $\BrPic(\Vec_G)$ and let $\alpha_{(L,\mu)}$ be the braided autoequivalence of $\Z(\Vec_G)$ corresponding   to  
$\M(L,\,\mu)$ upon the isomorphism \eqref{main iso}. Then
\begin{equation}
\label{alpha 11}
\alpha_{(L,\mu)}(\L_{(1,\,1)}) = \L_{(L_1,\, \mu|_{L_1\times L_1})},
\end{equation}
where $L_1 = L \cap(G \times 1)$.
\end{proposition}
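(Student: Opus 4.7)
The strategy is to identify $\alpha_{(L,\mu)}$ indirectly, by tracking what it does to a distinguished étale algebra of $\L_{(1,1)}$. The Lagrangian $\L_{(1,1)} = \Rep(G)$ is generated, as a fusion subcategory of $\Z(\Vec_G)$, by the étale algebra $I(\be)$, where $I:\Vec_G \to \Z(\Vec_G)$ is the right adjoint of the forgetful functor $F$. Hence, to identify $\alpha_{(L,\mu)}(\L_{(1,1)})$ it suffices to locate the étale algebra obtained from $I(\be)$ under the equivalence, and then read off the Lagrangian it generates.

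Applying Remark~\ref{discussing inverse} to $\alpha = \alpha_{(L,\mu)}$ produces an étale algebra $B := \alpha^{-1}(I(\be)) \in \Z(\Vec_G)$ with the defining property that $F(B)$-modules in $\Vec_G$ form $\M(L,\mu)$ (up to passing to an indecomposable component) as a left $\Vec_G$-module category. By Remark~\ref{restriction to one-sided}, the restriction of $\M(L,\mu)$ to a left $\Vec_G$-module is $\M(L_1,\mu|_{L_1\times L_1})$. Using Ostrik's classification of indecomposable $\Vec_G$-module categories, the corresponding algebra in $\Vec_G$ is the twisted group algebra $k_{\mu|_{L_1\times L_1}}[L_1]$; therefore $F(B)$ is isomorphic to $k_{\mu|_{L_1\times L_1}}[L_1]$ as an algebra in $\Vec_G$.

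By Remark~\ref{muresinv} the subgroup $L_1$ is normal in $G$ and the cocycle $\mu|_{L_1\times L_1}$ is $G$-invariant, which are precisely the conditions ensuring that $k_{\mu|_{L_1\times L_1}}[L_1]$ admits a commutative (étale) lift to $\Z(\Vec_G)$. The classification of Lagrangian subcategories in \cite{NN}, recalled at the start of Section~\ref{action on Grassmannian}, identifies this lift with the regular étale algebra of the Lagrangian $\L_{(L_1, \mu|_{L_1\times L_1})}$. Consequently $B$ lies in and generates $\L_{(L_1, \mu|_{L_1\times L_1})}$, and since Lagrangians in $\mathbb{L}_0(G)$ are determined by their regular étale algebras, this translates (after matching conventions for the direction of the equivalence \eqref{main iso} via Remark~\ref{Mop} if needed) into the asserted equality $\alpha_{(L,\mu)}(\L_{(1,1)}) = \L_{(L_1, \mu|_{L_1\times L_1})}$.

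The main obstacle is carefully executing the étale-lift step: verifying that the unique commutative lift of $k_{\mu|_{L_1\times L_1}}[L_1]$ to $\Z(\Vec_G)$ is precisely the regular algebra of $\L_{(L_1,\mu|_{L_1\times L_1})}$, not merely some object supported on $L_1$. This requires both the $G$-invariance of $\mu|_{L_1\times L_1}$ and the explicit description of $\L_{(N,\nu)}$ via the half-braiding formula $\gamma(x,a) = \mu(a,x)/\mu(x,a)\,\id$ on the $L_1$-graded components, to ensure that the lifted half-braiding on $B$ matches the defining half-braiding of objects in $\L_{(L_1,\mu|_{L_1\times L_1})}$.
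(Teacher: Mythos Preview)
Your approach is essentially the paper's: both arguments hinge on Remark~\ref{discussing inverse} (recovering $\M_\alpha$ as $F(\alpha^{-1}(I(\be)))$-modules) together with Remark~\ref{restriction to one-sided} (the left restriction of $\M(L,\mu)$ is $\M(L_1,\mu|_{L_1\times L_1})$). The only difference is that the paper runs the argument in the reverse direction, which dissolves the \'etale-lift obstacle you flag. Rather than starting from $\M(L,\mu)$ and trying to identify the Lagrangian containing $B=\alpha^{-1}(I(\be))$, the paper posits $\alpha(\L_{(1,1)})=\L_{(N,\nu)}$ for unknown $(N,\nu)$, invokes Remark~\ref{discussing inverse} to conclude that $\M_\alpha\cong\M(N,\nu)$ as a left $\Vec_G$-module, and then matches this against Remark~\ref{restriction to one-sided} to force $(N,\nu)=(L_1,\mu|_{L_1\times L_1})$. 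This sidesteps any uniqueness-of-lift question: one already knows $B$ is the regular algebra of \emph{some} Lagrangian $\L_{(N,\nu)}$, and the parameters are read off from the left module category rather than from an explicit half-braiding check. Your version would also go through once you make this observation, but the paper's phrasing is shorter and avoids the hedge about ``matching conventions for the direction'' via Remark~\ref{Mop}.
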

\begin{proof}
Let $\alpha \in \Aut^{br}(\Z(\Vec_G))$ be an autoequivalence such that $\alpha(\L_{(1,\,1)}) = \L_{(N,\nu)}$. 
Let $\M_\alpha \in \BrPic(\Vec_G)$ be the corresponding invertible $\Vec_G$-bimodule category
(see discussion after Theorem~\ref{BrPic=Aut-br}). By Remark~\ref{discussing inverse} 
$\M_\alpha$ is identified, as a left $\Vec_G$-module category,  with the category of modules over the twisted subgroup algebra 
$(kN)_\nu$, i.e., $\M_\alpha \cong \M(N,\, \nu)$. Comparing this with Remark~\ref{restriction to one-sided}
we obtain the result.
\end{proof}

\begin{remark}
Here is an alternative way to deduce Proposition~\ref{M(L,mu) on L}. 
It was observed in \cite{NN} that there is a bijection between the set of $\Vec_G$-module categories $\M$
such that the dual category $(\Vec_G)^*_\M$ is pointed and $\mathbb{L}(G)$.  This bijection is equivariant
with respect to the group isomorphism $\BrPic(\Vec_G) \xrightarrow{\sim} \Aut^{br}(\Z(\Vec_G))$. 
This implies \eqref{alpha 11}. 
\end{remark}

\section{Examples: symmetric and alternating groups}
\label{examples: symmetric}

\subsection{Symmetric group $S_3$}

It is well known that $H^2(S_3,\,k^{\times}) =0$, see \cite[Theorem 2.12.3]{K} and $\Out(S_3)=1$, hence $\Aa_0(S_3)=1$.
The only nontrivial normal Abelian subgroup of $S_3$ is isomorphic to $\GrpCthree$.
It follows that $|\mathbb{L}_0(S_3)|=2$.
By Corollary~\ref{orderofBrPic} $|\Aa(S_3))|=2$, thus 
\begin{equation}
\label{BrPic S3}
\Aa(S_3) \cong \GrpCtwo.
\end{equation}

\begin{example}
\label{S3 example}
By \eqref{BrPic S3} we have $\BrPic(\Rep(S_3)) \cong \mathbb{Z}/2\mathbb{Z}$.
There exists a non-trivial $\mathbb{Z}/2\mathbb{Z}$-extension of the fusion category $\Rep(S_3)$, 
namely the category  $\C(sl(2),\, 4)$ of highest weight integrable modules over the affine 
Lie algebra $\widehat{sl}(2)$ of level $4$. This is a weakly integral fusion category of dimension $12$.
Its  simple objects lying in the non-trivial component have dimension $\sqrt{3}$.
\end{example}

\subsection{Symmetric group $S_4$}
\label{section S4}

It is known that $H^2(S_4,\,k^{\times})\cong \GrpCtwo$, see \cite[Theorem~2.12.3]{K}, and $\Out(S_4)=1$.
Hence, $\Aa_0(S_4)\cong \GrpCtwo$.

The set $\mathbb{L}(S_4)$ consists of three subcategories (see \cite[Example~5.2]{NN}):
\[
\L_{(1,1)},\, \L_{(\GrpV, 1)},\,\mbox{and } \L_{(\GrpV, \mu)}, 
\]
where $\GrpV$ is identified with a normal subgroup of $S_4$ and 
$\mu$ is the non-trivial class in $H^2(\GrpV,\,k^{\times})$. 

We claim that $\Aa(S_4)$ permutes the two later categories.  To prove this 
we apply Proposition~\ref{action of A0G on L}.
It is enough to check that  the  restriction 
\[
H^2(S_4,\,k^{\times})\to H^2(\GrpV,\,k^{\times})
\] 
is injective. 
This follows from Theorem~\ref{5termexactseq} since
$H^1(S_3,\, \GrpV)=1$.

Thus, the map $\Aa(S_4) \to \Sym(\mathbb{L}_0(S_4))$ is injective and, hence   
\begin{equation}
\label{BrPic S4}
\Aa(S_4) \cong S_3.
\end{equation}

\subsection{Alternating group $A_4$}

It is known that $H^2(A_4,\,k^{\times})\cong \GrpCtwo$, see \cite{K}, and $\Out(A_4)\cong \GrpCtwo$.
The set $\mathbb{L}_0(A_4)$ consists of three Lagrangian subcategories (see  \cite[Example~5.2]{NN}):
\[
\L_{(1,1)},\, \L_{(\GrpV, 1)},\,\mbox{and } \L_{(\GrpV, \mu)}, 
\]
where $\GrpV$ is identified with the Sylow $2$-subgroup  of $A_4$ and 
$\mu$ is the non-trivial class in $H^2(\GrpV,\,k^{\times})$. 
We claim that $H^2(A_4,\,k^{\times}) \subset \Aa_0(A_4)$ permutes the two later categories.  To prove this 
we apply Proposition~\ref{action of A0G on L}.
It is enough to check that  the  restriction 
\[
H^2(A_4,\,k^{\times})\to H^2(\GrpV,\,k^{\times})
\] 
is injective, which  follows immediately from Theorem~\ref{Sylow restriction}.
Hence, the map 
\[
\pi:\Aa(A_4)\rightarrow \Sym(\mathbb{L}_0(A_4))\cong S_3
\]
is surjective.

By Corollary~\ref{orderofBrPic}  $\Aa(A_4)$ is a non-Abelian group of order $12$.
Its Sylow $2$-subgroup is isomorphic to $\GrpV \cong \Aa_0(A_4)$.  Furthermore, $\Aa(A_4)$
contains a normal subgroup of order $2$ (the kernel of $\pi$).  It is easy to check 
that the only group of order $12$ with above properties is the dihedral group of order $12$. 
Thus,
\begin{equation}
\label{BrPic A4}
\Aa(A_4)\cong D_{12}.
\end{equation}

\begin{remark}
Let $G=A_n$ or $S_n$, where  $n \geq 5$. Then $G$ has no normal Abelian subgroups and $\Aa(G) =H^2(G,\, k^\times) \rtimes \Out(G)$
by  Corollary~\ref{L(G) is trivial}.  The groups $H^2(G,\, k^\times)$ and $\Out(G)$ in this case are well known, see 
\cite[Section 2.12]{K}:

\begin{equation*}
 H^2(S_n,k^{\times})=  
  \mathbb{Z}/2\mathbb{Z}, \qquad \Out(S_n)=
 \begin{cases}
  1 & n\neq 6\\
  \mathbb{Z}/2\mathbb{Z} & n=6,
 \end{cases}
\end{equation*}

\begin{equation*}
 H^2(A_n,k^{\times})=
 \begin{cases}
  \mathbb{Z}/2\mathbb{Z} & n \neq 6,7\\
  \mathbb{Z}/6\mathbb{Z} &  n =6,7, 
 \end{cases}
\qquad
 \Out(A_n)=
 \begin{cases}
   \mathbb{Z}/2\mathbb{Z} & n\neq 6\\
  \mathbb{Z}/2\mathbb{Z}\times  \mathbb{Z}/2\mathbb{Z} & n=6.
 \end{cases}
\end{equation*}
\end{remark}

\section{Examples: non-abelian groups of order $8$}
\label{examples: order 8}

\subsection{Dihedral group $D_8$}

The following is a standard representation of   $D_8$ by generators and relations:

\begin{equation*}
D_8=  \langle r,s \mid r^4=s^2=1, sr=r^{-1}s \rangle.
\end{equation*}
We have  $\Out(D_8)\cong\GrpCtwo$, where the nontrivial element
is represented by the  automorphism $a$  given by
\[
a(r) =r,\qquad a(s) = sr.
\]
Also, $H^2(D_8,\,k^{\times})\cong\GrpCtwo$, see \cite[Theorem~2.11.4]{K}.
Thus, $\Aa_0(D_8)\cong\GrpV$.

By analyzing subgroup structure of $D_8$ we see that $\Z(\Vec_{D_8})$ has
seven Lagrangian subcategories,
\begin{equation}\label{D8Llist}
 \L_{(1,1)},\, \L_{(\langle r\rangle, 1)},\, \L_{(\langle s,r^2\rangle, 1)},\, \L_{(\langle sr,r^2\rangle, 1)},\, \L_{(\langle s,r^2\rangle, \mu_1)},\,\\
 \L_{(\langle sr,r^2\rangle, \mu_2)},\,\mbox{and } \L_{(\langle r^2\rangle, 1)},
\end{equation}
where $\mu_1, \mu_2$ denote nontrivial cohomology classes of the respective subgroups.

The following fact was established in \cite[Example~5.1]{NN}.  It is included here for the reader's convenience.

\begin{lemma}
 The Lagrangian subcategories in $\mathbb{L}_0(D_8)$ are precisely the following:
 \begin{equation}
 \label{D8L0list}
 \L_{(1,1)},\, \L_{(\langle r\rangle, 1)},\, \L_{(\langle s,r^2\rangle, 1)},\, \L_{(\langle sr,r^2\rangle, 1)},\, \L_{(\langle s,r^2\rangle, \mu_1)},\,\mbox{and }\,
  \L_{(\langle  sr,r^2\rangle, \mu_2)}.
\end{equation}
\end{lemma}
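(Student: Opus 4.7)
The plan is to use the description of $\L_{(N,\mu)}$ as $\Rep(G_{(N,\mu)})$ recalled after \eqref{imageofmu}: the pair $(N,\mu)$ determines a group $G_{(N,\mu)}$ of order $|D_8|=8$ sitting in an extension
\[
1 \to \widehat{N} \to G_{(N,\mu)} \to G/N \to 1
\]
classified by the image $\nu$ of $\mu$ in $H^2(G/N,\widehat{N})$. Since $\L_{(N,\mu)} \cong \Rep(G_{(N,\mu)})$ as braided fusion categories, one has $\L_{(N,\mu)} \in \mathbb{L}_0(D_8)$ if and only if $G_{(N,\mu)} \cong D_8$, so the task reduces to identifying the isomorphism type of $G_{(N,\mu)}$ for each of the seven pairs in \eqref{D8Llist}.

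I would first handle the exceptional pair $(\langle r^2\rangle,1)$. Since $r^2$ is central in $D_8$, the conjugation action of $G/N \cong \GrpV$ on $\widehat{\langle r^2\rangle}$ is trivial and, because $\mu=1$, so is $\nu$; by Remark~\ref{about GNB}(1) one obtains $G_{(\langle r^2\rangle,1)} \cong \GrpCtwo\times\GrpV$, which is abelian and hence not isomorphic to $D_8$. This excludes $\L_{(\langle r^2\rangle,1)}$ from $\mathbb{L}_0(D_8)$.

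For the remaining six pairs I would verify $G_{(N,\mu)}\cong D_8$. The pair $(1,1)$ is trivial. For $(\langle r\rangle,1)$, $(\langle s,r^2\rangle,1)$, and $(\langle sr,r^2\rangle,1)$, Remark~\ref{about GNB}(1) yields $G_{(N,1)}\cong \widehat{N}\rtimes (G/N)$; inspecting the dualised conjugation action of $D_8/N$ on $\widehat{N}$ shows that it is non-trivial in each case, producing $\mathbb{Z}/4\rtimes \mathbb{Z}/2$ with inversion (respectively $\GrpV\rtimes \mathbb{Z}/2$ with a non-trivial involutive action), both isomorphic to $D_8$.

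The main obstacle is the two pairs $(\langle s,r^2\rangle,\mu_1)$ and $(\langle sr,r^2\rangle,\mu_2)$, where $\mu$ is non-trivial and the transgression $\nu$ may fail to vanish, so $G_{(N,\mu)}$ could a priori be $D_8$ or $Q_8$. Here I would argue uniformly: $\widehat{N}\cong\GrpV$ embeds as a normal subgroup of $G_{(N,\mu)}$, so $G_{(N,\mu)}$ contains a Klein four subgroup, which rules out $Q_8$ (whose only subgroup of order $4$ is cyclic). Moreover, the induced action of $G/N$ on $\widehat{N}$ is non-trivial, since it is dual to the non-trivial conjugation action of $D_8/N$ on $N$; this forces $G_{(N,\mu)}$ to be non-abelian. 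The only non-abelian group of order $8$ containing $\GrpV$ is $D_8$, so $G_{(N,\mu)}\cong D_8$ in both cases, completing the list \eqref{D8L0list}.
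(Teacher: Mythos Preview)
Your proposal is correct and follows essentially the same approach as the paper's proof: reduce to identifying the isomorphism type of $G_{(N,\mu)}$, handle the trivial-$\mu$ cases via Remark~\ref{about GNB}(1), and for the non-trivial $\mu$ cases use that $G_{(N,\mu)}$ is a non-abelian group of order $8$ containing a normal copy of $\GrpV$, which forces $G_{(N,\mu)}\cong D_8$. Your write-up is in fact slightly more explicit than the paper's in justifying why $G_{(N,\mu)}$ is non-abelian (via the non-trivial dual action), whereas the paper simply asserts this.
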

\begin{proof}
Clearly, $\L_{(1,1)} \in \mathbb{L}_0(D_8)$. Using Remark~\ref{about GNB} we see that
Lagrangian subcategories  $\L_{(\langle r\rangle, 1)},\, \L_{(\langle s,r^2\rangle, 1)}$, and $\L_{(\langle sr,r^2\rangle, 1)}$
are all equivalent to $\Rep(D_8)$., i.e., belong to $\mathbb{L}_0(D_8)$. 
To see that subcategories $\L_{(\langle s,r^2\rangle, \mu_1)}$ and  $\L_{(\langle  sr,r^2\rangle, \mu_2)}$ are in $\mathbb{L}_0(D_8)$
note that each of the is equivalent to a category $\Rep(G)$, where $G$ is a non-Abelian group of order $8$ having a normal subgroup
isomorphic to $ \GrpCtwo\times \GrpCtwo$. The only group $G$ with this property is $D_8$.

Finally,  $\L_{(\langle r^2\rangle, 1)}$ is equivalent to $\Rep(\GrpCtwo\times\GrpCtwo\times\GrpCtwo)$  and so is not in $\mathbb{L}_0(D_8)$.
\end{proof}


\begin{lemma}
\label{D8isomorphism}
The restriction map $H^2(D_8,\,k^{\times})\to H^2(\mathbb{Z}/2\mathbb{Z} \times \mathbb{Z}/2\mathbb{Z},\,k^{\times})$ is an isomorphism.
 \end{lemma}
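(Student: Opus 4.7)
The plan is to apply Theorem~\ref{5termexactseq} to the semidirect-product decomposition $D_8 = V \rtimes T$ with $V=\langle s,r^2\rangle$ the normal Klein four subgroup and $T=\langle sr\rangle$ (a valid order-two complement, since $(sr)^2=1$ by the dihedral relations and $sr\notin V=\{1,s,r^2,sr^2\}$). Because $H^2(T,k^\times)=H^2(\mathbb{Z}/2\mathbb{Z},k^\times)=0$, the subgroup $\tilde M(D_8)$ from Theorem~\ref{5termexactseq} equals all of $H^2(D_8,k^\times)$, and the relevant portion of the five-term sequence reads
\begin{equation*}
0 \longrightarrow H^1(T,\widehat V) \longrightarrow H^2(D_8,k^\times) \xrightarrow{\text{res}} H^2(V,k^\times)^T.
\end{equation*}
Both $H^2(D_8,k^\times)$ (recalled at the beginning of this subsection) and $H^2(V,k^\times)$ (the classical Schur multiplier of the Klein four group) have order two, so it suffices to prove $H^1(T,\widehat V)=0$: injectivity of the restriction combined with the equality of orders will force it to be an isomorphism.

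To compute $H^1(T,\widehat V)$ I would first determine the conjugation action of $T$ on $V$. Since $r^2$ is central in $D_8$ it is fixed by $T$, while the relation $rs=sr^{-1}$ gives $(sr)\cdot s\cdot (sr)^{-1}=sr^2$; hence the generator $\sigma=sr$ swaps $s$ with $sr^2$. In the basis $\{s,r^2\}$ of $V\cong\mathbb F_2^2$ the operator $1+\sigma$ on $V$ sends $s\mapsto r^2$ and $r^2\mapsto 0$. Because $\sigma$ has order two, the dual action on $\widehat V$ is represented by the transpose of this matrix, and one reads off that the operator $1+\sigma$ on $\widehat V$ has kernel and image both equal to the one-dimensional subspace of characters trivial on $\langle r^2\rangle$. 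For cyclic $T$ of order two, $H^1(T,\widehat V)=\ker(1+\sigma)/\mathrm{im}(1-\sigma)$, and since $1-\sigma=1+\sigma$ in characteristic two this quotient vanishes, giving $H^1(T,\widehat V)=0$ as needed.

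The only real obstacle is the bookkeeping of the dual $T$-action on $\widehat V$; everything else is a direct appeal to Theorem~\ref{5termexactseq} together with the standard orders of the two Schur multipliers.
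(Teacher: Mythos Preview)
Your proof is correct, but it takes a different route from the paper's. The paper does not apply Theorem~\ref{5termexactseq} to $D_8$ at all; instead it factors the restriction through $S_4$. Since $D_8$ is a Sylow $2$-subgroup of $S_4$, Theorem~\ref{Sylow restriction} gives that $H^2(S_4,k^\times)\to H^2(D_8,k^\times)$ is injective, and the computation in Section~\ref{section S4} (where Theorem~\ref{5termexactseq} was used with $S_4=V\rtimes S_3$ and $H^1(S_3,\widehat V)=0$) already established that the composite restriction $H^2(S_4,k^\times)\to H^2(V,k^\times)$ is an isomorphism. Comparing orders then forces $H^2(D_8,k^\times)\to H^2(V,k^\times)$ to be an isomorphism as well.

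Your argument is more self-contained: you work directly with the splitting $D_8=V\rtimes\langle sr\rangle$ and compute $H^1(\mathbb{Z}/2\mathbb{Z},\widehat V)=0$ explicitly via the matrix of the action. This avoids any dependence on the $S_4$ section and on the Sylow restriction theorem, at the cost of a small linear-algebra calculation. The paper's approach, by contrast, recycles work already done and sidesteps the explicit $H^1$ computation. Both are perfectly valid; yours is arguably the more elementary and direct proof of the lemma as stated.
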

\begin{proof}
By Theorem~\ref{Sylow restriction} the restriction map $M(S_4)\rightarrow M(D_8)$ is injective.
We saw in Section~\ref{section S4} that the restriction
\[
H^2(S_4,\,k^{\times})\rightarrow H^2(\mathbb{Z}/2\mathbb{Z}\times \mathbb{Z}/2\mathbb{Z},\,k^{\times})
\]
 is an isomorphism.
This implies the claim.
\end{proof}

Let us describe the action of  $\Aa_0(D_8)$ on $\Sym(\mathbb{L}_0(D_8))$.

Let $\mu \in H^2(D_8,\,k^{\times})\subset \Aa_0(D_8)$ and $a\in \Out(D_8) \subset \Aa_0(D_8)$
be the generators of $\Aa_0(D_8) \cong H^2(D_8,\,k^{\times})\rtimes \Out(D_8) \cong \GrpCtwo\times\GrpCtwo$.
By  Lemma~\ref{D8isomorphism} $\mu$ maps $\L_{(\langle s,r^2\rangle, 1)}$ to $\L_{(\langle s,r^2\rangle, \mu_1)}$
and  $\L_{(\langle sr,r^2\rangle, 1)}$ to $\L_{(\langle sr,r^2\rangle, \mu_2)}$.
Also  $a$ maps $\L_{(\langle s,r^2\rangle, 1)}$ to $\L_{(\langle sr,r^2\rangle, 1)}$ and  $\L_{(\langle s,r^2\rangle, \mu_1)}$ to $\L_{(\langle sr,r^2\rangle, \mu_2)}$.

Thus, $\pi:\Aa(D_8)\rightarrow \Sym(\mathbb{L}_0(D_8)\cong S_6$ is injective, i.e., $\Aa(D_8)$ is a transitive
subgroup of $S_6$.   By Corollary~\ref{orderofBrPic} $|\Aa(D_8)|=24$.

Enumerating  Lagrangian subcategories in the list \eqref{D8L0list} we have
\[
\Aa_0(D_8) = \{1,(35)(46),(34)(56),(36)(45)\}
\]
as a subgroup of $S_6$.  Other stabilizers of points in $\mathbb{L}_0(D_8)$
are the following conjugates of $\Aa(D_8)$:
\begin{equation}
\label{3 subgroups}
\{1,(12)(56),(15)(26),(16)(25)\} \quad \mbox{and} \quad \{1,(12)(34),(13)(24),(14)(23)\}.
\end{equation}
Note that the  elements
\[
s_1 := (13)(24),\quad s_2:=(15)(26),\quad s_3:=(14)(23)
\]
satisfy the usual symmetric group relations
\[
(s_1s_2)^3 =1,\, (s_2s_3)^3=1,\, s_1s_3=s_3s_1,\, s_1^2= s_2^2=s_3^2=1,
\]
and, hence, generate a subgroup isomorphic to $S_4$. Thus,
\begin{equation}
\label{BrPic D8}
\Aa(D_8)\cong S_4.
\end{equation}

\begin{example}
Using Remark~\ref{rel prime ext} we can construct a non-trivial $\mathbb{Z}/3\mathbb{Z}$-extension
of $\Rep(D_8)$ (or $\Vec_{D_8}$). Any such an extension is an integral fusion category
of dimension $24$ all whose non-invertible simple objects  have dimension $2$.
\end{example}

\subsection{Quaternion group $Q_8$}
\label{Q8calculations}
 
Let $Q_8=\{\pm 1,\pm i,\pm j,\pm k\}$ denote the quaternion group.

It is known that $H^2(Q_8,\,k^{\times})=1$, see \cite{K}, and $\Out(Q_8)=S_3$. Thus, $\Aa_0(D_8)\cong S_3$. 

It is easy to find  Lagrangian subcategories of $\Z(\Vec_{Q_8})$.
There are five normal Abelian subgroups of $Q_8$:
\[
1,\, \langle -1 \rangle,\, \langle i \rangle,\,  \langle j \rangle, \quad\mbox{and}\quad  \langle k \rangle.
\]
Since $H^2(N,\,k^{\times})=1$ for each of these subgroups there are precisely five Lagrangian subcategories of $\Z(\Vec_{Q_8})$.
By Remark~\ref{about GNB} the Lagrangian subcategory $\L_{(N,1)}$ is equivalent to $\Rep(\widehat{N}\rtimes (Q_8/N))$.
But $Q_8$ is not isomorphic to  any non-trivial semidirect product.
Thus, $\Rep(\widehat{N}\rtimes (Q_8/ N))$ is equivalent to $\Rep(Q_8)$ if and only if $N$ is trivial.
It follows that there is precisely one Lagrangian subcategory in $\mathbb{L}_0(Q_8)$. Thus,
\begin{equation}
\label{BrPic Q8}
\Aa(Q_8) \cong S_3.
\end{equation}

\begin{remark}
Since $\BrPic(\Rep(Q_8)) \cong \BrPic(\Vec_{Q_8}) =\Out(Q_8)$ we see that categories $\Rep(Q_8)$ and $\Vec_{Q_8}$ have
no non-trivial extensions. 
\end{remark}

\section{Examples: groups of order $pq$}
\label{order pq section}

Let $p,\,q$ be prime numbers such that $q \equiv 1 (\mod p)$.  It is well known that there is
a unique (up to an isomorphism) finite group $G$ of order $pq$, namely $G = \mathbb{Z}/q\mathbb{Z}
\rtimes \mathbb{Z}/p\mathbb{Z}$.

We will need the following  presentation of $G$ by generators and relations:
 \begin{equation}
 \label{pq proporties}
  G= \langle x,y \mid x^q=y^p=1 \text{ and }  yxy^{-1}=x^a \rangle,
 \end{equation}
 for a fixed  $a$ such that $a^p\equiv (1 \mod q)$.

\begin{lemma}
The group $\Out(G)$ is isomorphic to $\mathbb{Z}/\frac{q-1}{p}\mathbb{Z}$.
\end{lemma}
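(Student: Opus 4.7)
The plan is to describe $\Aut(G)$ explicitly as the affine group of $\mathbb{F}_q$, identify $\Inn(G)$ inside it as a specific subgroup, and then read off the cyclic quotient.

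First I would observe that $\langle x \rangle$ is the unique Sylow $q$-subgroup of $G$ (it is normal, of index $p < q$), hence characteristic, so every $\varphi \in \Aut(G)$ satisfies $\varphi(x) = x^i$ for some $i \in (\mathbb{Z}/q\mathbb{Z})^*$. Next, any element of order $p$ in $G$ has the form $x^j y^k$ with $k \not\equiv 0 \pmod p$, so $\varphi(y) = x^j y^k$ for some such $j, k$. Applying $\varphi$ to the relation $yxy^{-1} = x^a$ and using $yxy^{-1}=x^a$ repeatedly gives
\begin{equation*}
\varphi(y)\varphi(x)\varphi(y)^{-1} = x^{ia^k}, \qquad \varphi(x)^a = x^{ia},
\end{equation*}
so $a^{k-1} \equiv 1 \pmod q$. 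Since $a$ has order exactly $p$ in $(\mathbb{Z}/q\mathbb{Z})^*$ and $1 \le k \le p-1$, this forces $k=1$. Conversely, each pair $(i,j) \in (\mathbb{Z}/q\mathbb{Z})^* \times \mathbb{Z}/q\mathbb{Z}$ visibly extends to an automorphism $\varphi_{i,j}: x \mapsto x^i,\ y \mapsto x^j y$, so $|\Aut(G)| = q(q-1)$ and a short computation gives $\varphi_{i',j'} \circ \varphi_{i,j} = \varphi_{ii',\, i'j + j'}$, identifying $\Aut(G)$ with the affine group $\mathbb{F}_q \rtimes \mathbb{F}_q^*$.

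Next I would compute $\Inn(G)$. Direct calculation of conjugation by $x^s y^t$ yields $\varphi_{x^s y^t}(x) = x^{a^t}$ and $\varphi_{x^s y^t}(y) = x^{s(1-a)} y$, i.e.\ $\varphi_{x^s y^t} = \varphi_{a^t,\, s(1-a)}$. Since $a \not\equiv 1 \pmod q$, the element $1-a$ is a unit in $\mathbb{Z}/q\mathbb{Z}$, so as $(s,t)$ ranges over $\mathbb{Z}/q\mathbb{Z} \times \mathbb{Z}/p\mathbb{Z}$ the second coordinate fills all of $\mathbb{Z}/q\mathbb{Z}$ and the first fills the order-$p$ subgroup $\langle a \rangle \subset \mathbb{F}_q^*$. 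Thus under the identification $\Aut(G) = \mathbb{F}_q \rtimes \mathbb{F}_q^*$, the subgroup $\Inn(G)$ corresponds to $\mathbb{F}_q \rtimes \langle a \rangle$.

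Finally, taking the quotient gives
\begin{equation*}
\Out(G) = \Aut(G)/\Inn(G) \cong \mathbb{F}_q^* / \langle a \rangle,
\end{equation*}
which is cyclic of order $(q-1)/p$ since $\mathbb{F}_q^*$ is cyclic. There is no serious obstacle in this argument: the only mildly delicate point is the verification that the relation $yxy^{-1}=x^a$ forces $k=1$ in step two, which is what pins the automorphism group down to the affine group rather than a larger group.
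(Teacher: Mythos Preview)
Your proof is correct and follows essentially the same strategy as the paper's: describe automorphisms explicitly on the generators $x,y$, use the defining relation $yxy^{-1}=x^a$ to constrain them, and then identify which are inner. The organizational difference is that the paper first reduces modulo $\Inn(G)$ by invoking Sylow conjugacy (all order-$p$ subgroups are conjugate, so after composing with an inner automorphism one may assume $\alpha(\langle y\rangle)=\langle y\rangle$, i.e.\ $\alpha(y)=y^n$ with no $x^j$ factor), whereas you compute the full $\Aut(G)$ first, identify it with the affine group $\mathbb{F}_q\rtimes\mathbb{F}_q^\times$, and only then pass to the quotient by $\Inn(G)\cong\mathbb{F}_q\rtimes\langle a\rangle$. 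Your route is slightly longer but has the payoff of exhibiting the full automorphism group structure; the paper's shortcut via Sylow conjugacy gets to $\Out(G)$ a bit faster but leaves $\Aut(G)$ itself implicit.
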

\begin{proof}
It is clear that any automorphism $\alpha$ of $G$ maps $\langle x \rangle$ to itself.  Since all subgroups of $G$ of order $p$
are conjugate to each other it follows that the composition of $\alpha$ with some inner automorphism of $G$ maps 
$\langle y \rangle$ to itself. Thus, modulo an inner automorphism, $\alpha$ is given by
 \begin{equation}
 \label{our alpha}
\alpha(x)=x^m,\quad \alpha(y)=y^n
\end{equation}
for some $m,\,n$ such that $1 \leq m < q$ and $1\leq n < p$. It is straightforward to check that in order to preserve
defining relations \eqref{pq proporties} of $G$ we must have $n=1$.  Also, automorphisms \eqref{our alpha}
with $m=a^i, i=0,\dots, p-1$ are inner. Thus,
\[
\Out(G) \cong (\mathbb{Z}/q\mathbb{Z})^\times /(\mathbb{Z}/p\mathbb{Z}) \cong 
\mathbb{Z}/\tfrac{q-1}{p}\mathbb{Z},
\]
as required.
\end{proof}

It is known that $H^2(G,\,k^{\times})=1$, see \cite[Corollary~2.1.3]{K}.
The only normal Abelian subgroups of $G$ are 1 and $\mathbb{Z}/q\mathbb{Z}$.
Hence, $\Z(\Vec_G)$ contains  precisely two Lagrangian subcategories.
\[
\L_{(1,1)}\quad  \mbox{and }\quad  \L_{(\mathbb{Z}/q\mathbb{Z}, 1)}. 
\] 
This implies that $|\Aa(G)|=\tfrac{2(q-1)}{p}$.

\begin{lemma}
\label{order 2 elements}
Any $\alpha\in \Aa(G)$ such that $\alpha\not\in \Aa_0(G)$   has order $2$.
\end{lemma}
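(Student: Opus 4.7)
Let $\alpha \in \Aa(G)\setminus\Aa_0(G)$. Since $\Aa_0(G)$ has index two in $\Aa(G)$, we have $\alpha^2\in\Aa_0(G)$, and the goal is to prove $\alpha^2=1$. The strategy is to realize $\alpha$ as a bimodule category $\M(L,\mu)$ via Theorem~\ref{BrPic=Aut-br} and Proposition~\ref{M(L,mu) inv}, pin down $(L,\mu)$ explicitly, and then verify the hypotheses of Corollary~\ref{order 2}.

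First I would identify $(L,\mu)$. By Proposition~\ref{M(L,mu) on L}, $L_1=L\cap(G\times 1)$ is a nontrivial normal abelian subgroup of $G$, which forces $L_1=\langle x\rangle\times 1$; the symmetric argument via Remark~\ref{Mop} applied to $\alpha^{-1}$ gives $L_2=1\times\langle x\rangle$. Combined with conditions (i)--(iii) of Proposition~\ref{M(L,mu) inv}, $L$ has order $pq^2$, contains $\langle x\rangle\times\langle x\rangle$ as a normal subgroup of index $p$, and is generated over $\langle x\rangle\times\langle x\rangle$ by some element $(y^i,y^j)$ with $i,j$ coprime to $p$.

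Next I would use cohomology. Theorem~\ref{5termexactseq} combined with Proposition~\ref{cohomofprimeordergroups} (applicable since $p$ and $q^2$ are coprime) identifies $H^2(L,k^\times)$ with the $\langle y\rangle$-invariants of $H^2(\langle x\rangle\times\langle x\rangle, k^\times)\cong\mathbb{Z}/q\mathbb{Z}$ via restriction, so this restriction map is injective. Computing the induced action of $(y^i,y^j)$ on $H^2(\langle x\rangle\times\langle x\rangle,k^\times)$ and combining with non-degeneracy of $Alt(\mu)$ on $L_1\times L_2$ forces $i+j\equiv 0\pmod p$. After rescaling the generator I may assume $(y,y^{-1})\in L$, so $L$ is uniquely determined up to conjugation in $G\times G^{\op}$.

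Finally I would invoke Corollary~\ref{order 2} with $g=(1,1)$. The equality $L^\vee=L$ is immediate since $(y,y^{-1})^{-1}=(y^{-1},y)\in L$, and verifying that $\mu$ and $(\mu^\vee)^{-1}$ are cohomologous reduces, by injectivity of restriction, to a direct check on $\langle x\rangle\times\langle x\rangle$. Concretely, if the restriction of $[\mu]$ is represented by the bicharacter cocycle $((x^a,x^b),(x^c,x^d))\mapsto\zeta^{ad}$ for a primitive $q$-th root of unity $\zeta$, then $[\mu^\vee]$ restricts to the class of $\zeta^{bc}$, which represents $-[\mu]$, so $[(\mu^\vee)^{-1}]=[\mu]$. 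The main obstacle is this last cohomological verification and the explicit action of $\vee$ on $H^2$; once the restriction map is known to be injective it reduces to an elementary direct computation.
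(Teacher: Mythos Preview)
Your approach is essentially the same as the paper's: exhibit the bimodule category $\M(L,\mu)$ with $L$ generated by $\langle x\rangle\times\langle x\rangle$ and an element $(y^i,y^j)$, use Theorem~\ref{5termexactseq} to identify $H^2(L,k^\times)$ with its restriction to $\langle x\rangle\times\langle x\rangle$, and then invoke Corollary~\ref{order 2}. The paper simply writes down $L$ and asserts at the end that ``it is straightforward to check that every $\alpha\notin\Aa_0(G)$ is isomorphic to some $\alpha_{(L,\mu)}$'', whereas you run the argument in the other direction, deducing the shape of $L$ from $\alpha$ via Proposition~\ref{M(L,mu) on L} and Remark~\ref{Mop}; you also make explicit the verification that $L^\vee=L$ and $[(\mu^\vee)^{-1}]=[\mu]$, which the paper leaves implicit in its citation of Corollary~\ref{order 2}. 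So your write-up is a more detailed version of the same proof rather than a different one.
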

\begin{proof}
The condition $\alpha\in \not\in \Aa_0(G)$  means that $\alpha$ permutes the pair of Lagrangian
subcategories of $\Z(\Vec_G)$.

Consider the subgroup $L \subset G \times G^\op$ generated by $(y,\,y^{-1})$ and $\langle x\rangle \times
\langle x\rangle$. We have
\[
L \cong (\mathbb{Z}/q\mathbb{Z} \times \mathbb{Z}/q\mathbb{Z}) \rtimes \mathbb{Z}/p\mathbb{Z}.
\]
By Theorem~\ref{5termexactseq} the restriction map 
\[
H^2(L,\,k^{\times}) \to H^2(\mathbb{Z}/q\mathbb{Z} \times \mathbb{Z}/q\mathbb{Z},\,k^{\times})
\cong \mathbb{Z}/q\mathbb{Z}
\]
is an isomorphism.  By Corollary~\ref{order 2}  the braided auto-equivalence $\alpha_{(L,\mu)}$ 
corresponding to the $\Vec_G$-bimodule
category $\M(L,\,\mu)$ has order $2$ in $\Aa(G)$ for any non-trivial $\mu \in H^2(L,\,k^{\times})$.

It is straightforward to check that  every $\alpha\not\in \Aa_0(G)$  is isomorphic to some  $\alpha_{(L,\mu)}$ 
(there are  $\frac{q-1}{p}$ isomorphism classes of such equivalences). 
\end{proof}

Lemma~\ref{order 2 elements} implies that
\begin{equation}
\label{BrPicpq}
\Aa(G)\cong D_{\frac{2(q-1)}{p}}.
\end{equation}

\begin{example}
Suppose that $p,\,q$ are odd.   Using Remark~\ref{rel prime ext} we conclude that any ``reflection" 
element of  $\BrPic(\Vec_G)\cong D_{\frac{2(q-1)}{p}}$ gives rise to a non-trivial $\mathbb{Z}/2\mathbb{Z}$-extension
of $\Vec_G$.   Any such an extension is a weakly integral fusion category of dimension $2pq$.
The non-trivial component of this extension contains $p$ classes of simple objects of dimension $\sqrt{q}$.
\end{example}


\section{Examples : dihedral groups $D_{2n}$ where $n$ is odd}
\label{examples: dihedral}

Recall that for any integer $n\geq 3$ we denote by $D_{2n}$ the dihedral group on $n$ vertices.
That is,
\begin{equation}
\label{properties of dihedral group}
D_{2n}= \langle r,\,s \mid  r^n=1,\, s^2=1,\, \text{ and } (sr)^2=1\rangle.
\end{equation}


Let $k$ be the number of distinct prime divisors of $n$.

\begin{lemma}
$\Out(D_{2n})\cong (\mathbb{Z}/n\mathbb{Z})^\times /\{ \pm 1\}$. 
\end{lemma}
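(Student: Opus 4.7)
The plan is to compute $\Aut(D_{2n})$ and $\Inn(D_{2n})$ directly using the presentation \eqref{properties of dihedral group}, then take the quotient.

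First I would parameterize automorphisms. Since $n$ is odd, the elements of order $n$ in $D_{2n}$ are precisely the generators of $\langle r\rangle$, so any $\alpha\in \Aut(D_{2n})$ must send $r$ to $r^i$ for some $i\in (\mathbb{Z}/n\mathbb{Z})^\times$. Similarly, for $n$ odd the involutions of $D_{2n}$ are exactly the reflections $sr^j$, $j\in \mathbb{Z}/n\mathbb{Z}$, so $\alpha(s)=sr^j$. A direct check shows that the relation $(s r)^2=1$ is automatically preserved for every such $(i,j)$. Hence
\[
\Aut(D_{2n}) \;=\;\{\alpha_{(i,j)} : i\in (\mathbb{Z}/n\mathbb{Z})^\times,\ j\in \mathbb{Z}/n\mathbb{Z}\},\qquad |\Aut(D_{2n})|=n\,\varphi(n),
\]
with multiplication $\alpha_{(i_1,j_1)}\circ\alpha_{(i_2,j_2)} = \alpha_{(i_1 i_2,\,j_1+i_1 j_2)}$, so $\Aut(D_{2n})\cong \mathbb{Z}/n\mathbb{Z}\rtimes (\mathbb{Z}/n\mathbb{Z})^\times$.

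Next I would compute $\Inn(D_{2n})$. Conjugation by $r^k$ fixes $r$ and sends $s$ to $sr^{-2k}$, while conjugation by $sr^k$ sends $r$ to $r^{-1}$ and $s$ to $sr^{2k}$. This is exactly the place where oddness of $n$ matters: since $2$ is a unit in $\mathbb{Z}/n\mathbb{Z}$, the set $\{2k\bmod n\}$ equals all of $\mathbb{Z}/n\mathbb{Z}$. Therefore
\[
\Inn(D_{2n}) \;=\; \{\alpha_{(\pm 1,\,j)} : j\in \mathbb{Z}/n\mathbb{Z}\},
\]
a subgroup of order $2n$ (consistent with $Z(D_{2n})=1$ for $n$ odd).

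Finally I would identify the quotient. Using the computation
\[
\alpha_{(i,j)}\cdot \alpha_{(i',j')}^{-1} \;=\; \alpha_{(ii'^{-1},\,j- ii'^{-1}j')},
\]
this element lies in $\Inn(D_{2n})$ if and only if $ii'^{-1}\in\{\pm 1\}$. So the projection $\alpha_{(i,j)}\mapsto i$ descends to a surjection $\Aut(D_{2n})\to (\mathbb{Z}/n\mathbb{Z})^\times/\{\pm 1\}$ with kernel $\Inn(D_{2n})$, yielding the desired isomorphism. There is no real obstacle here beyond a careful bookkeeping of the conjugation formulas; the only place odd $n$ is essential is in the step showing that conjugations by powers of $r$ already realize every translation $s\mapsto sr^j$.
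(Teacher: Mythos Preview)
Your proof is correct and follows essentially the same approach as the paper: a direct computation using the presentation \eqref{properties of dihedral group}. The paper is terser, simply exhibiting the representatives $a_i:r\mapsto r^i,\ s\mapsto s$ and asserting that $a_i\sim a_j$ modulo inner automorphisms iff $i\equiv \pm j\pmod n$; you fill in exactly the computations (the full $\Aut$ and $\Inn$ and the role of $2\in(\mathbb{Z}/n\mathbb{Z})^\times$) that the paper leaves as ``straightforward to check.''
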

\begin{proof}
For any $i$ relatively prime to $n$ consider $a_i\in \Aut(D_{2n})$ given by
\[
a_i(s)=s,\, a_i(r)=r^i.
\]
It is straightforward to check that the automorphisms  $a_i$ and $a_j$ are congruent modulo an
inner automorphism of $D_{2n}$ if and only if $i\equiv  -j (\mod n)$ and that every outer automorphism
of $D_{2n}$ is congruent to some $a_i$.  This implies the result.
\end{proof}

When $n$ is odd, $H^2(D_{2n},k^{\times})=0$ \cite[Proposition~2.11.4]{K}.

\begin{corollary}
$\St(\Rep(D_{2n})) \cong  (\mathbb{Z}/n\mathbb{Z})^\times /\{ \pm 1\}$.
\end{corollary}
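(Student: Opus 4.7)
The plan is to deduce this as an immediate consequence of the general structural description of $\St(\Rep(G))$ established earlier, together with the two facts just recorded about $D_{2n}$ for odd $n$. By Corollary~\ref{found StRepG}, for any finite group $G$ one has
\[
\St(\Rep(G)) \cong H^2(G,\,k^\times) \rtimes \Out(G).
\]
I would specialize this to $G = D_{2n}$ and then plug in the two ingredients: the previous lemma provides $\Out(D_{2n}) \cong (\mathbb{Z}/n\mathbb{Z})^\times/\{\pm 1\}$, while the cited fact from \cite[Proposition~2.11.4]{K} provides $H^2(D_{2n},\,k^\times) = 0$ when $n$ is odd.

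The semidirect product then collapses: the normal factor is trivial, so $\St(\Rep(D_{2n}))$ is canonically identified with $\Out(D_{2n})$, which by the preceding lemma is $(\mathbb{Z}/n\mathbb{Z})^\times/\{\pm 1\}$. There is essentially no obstacle here — the content of the statement has already been absorbed into the two preceding results, and the corollary is just their combination. The only thing worth remarking is the use of the hypothesis that $n$ is odd, which is precisely what makes $H^2(D_{2n},\,k^\times)$ vanish; for even $n$ one would instead pick up a $\mathbb{Z}/2\mathbb{Z}$ factor from the Schur multiplier and the argument would branch.
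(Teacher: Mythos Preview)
Your proposal is correct and matches the paper's intended argument exactly: the corollary is stated without proof precisely because it is the immediate specialization of Corollary~\ref{found StRepG} to $G=D_{2n}$, using the preceding lemma on $\Out(D_{2n})$ and the vanishing of $H^2(D_{2n},k^\times)$ for odd $n$.
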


\begin{proposition}
\label{L0 in dihedral case}
The set $\mathbb{L}_0(D_{2n})$ consists of subcategories $\L_{(\langle r^b\rangle,\,1)}$ 
where $b$ divides $n$ and $\frac{n}{b}$ and $b$ are relatively prime.
\end{proposition}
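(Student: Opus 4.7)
The plan is to first enumerate $\mathbb{L}(D_{2n})$ and then single out those Lagrangian subcategories equivalent to $\Rep(D_{2n})$ by computing the associated group $G_{(N,\mu)}$.

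First I would identify all Lagrangian subcategories. Since $n$ is odd, the normal subgroups of $D_{2n}$ are precisely $\langle r^b\rangle$ for $b\mid n$ together with $D_{2n}$ itself; only the $\langle r^b\rangle$ are abelian. These are cyclic, so $H^2(\langle r^b\rangle,\,k^\times)=0$ and the only $G$-invariant class is trivial. By the parameterization of $\mathbb{L}(G)$ from \cite{NN}, every element of $\mathbb{L}(D_{2n})$ is of the form $\L_{(\langle r^b\rangle,\,1)}$ for a unique divisor $b$ of $n$.

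Next, since $\L_{(N,\mu)}\cong \Rep(G_{(N,\mu)})$ as braided fusion categories and $\Rep(G_1)\cong \Rep(G_2)$ (as symmetric tensor categories) if and only if $G_1\cong G_2$, membership in $\mathbb{L}_0(D_{2n})$ is equivalent to $G_{(\langle r^b\rangle,\,1)} \cong D_{2n}$. By Remark~\ref{about GNB}(1),
\[
G_{(\langle r^b\rangle,\,1)} \cong \widehat{\langle r^b\rangle}\rtimes (D_{2n}/\langle r^b\rangle)
\cong \mathbb{Z}/(n/b)\mathbb{Z} \rtimes D_{2b},
\]
where the image $\bar r$ of $r$ acts trivially on $\widehat{\langle r^b\rangle}$ (because $r$ centralizes $\langle r^b\rangle$) and the image $\bar s$ of $s$ acts by inversion (because $sr^b s^{-1}=r^{-b}$).

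Finally, I would determine when this semidirect product is isomorphic to $D_{2n}$. The key is that $D_{2n}$ contains a cyclic subgroup of order $n$, whereas every element $(\chi^i,\bar r^j \bar s)$ with nontrivial reflection part squares to the identity (using that $\bar s$ inverts both $\chi$ and $\bar r$ up to sign via the semidirect product structure). Hence elements of maximal order in $G_{(\langle r^b\rangle,\,1)}$ lie in the ``rotation'' subgroup $\langle \chi,\bar r\rangle \cong \mathbb{Z}/(n/b)\times \mathbb{Z}/b$, whose exponent equals $\mathrm{lcm}(n/b,b)=n/\gcd(n/b,b)$. Thus $G_{(\langle r^b\rangle,\,1)}$ has an element of order $n$ if and only if $\gcd(b,n/b)=1$; when this holds, $\langle\chi,\bar r\rangle\cong \mathbb{Z}/n$, $\bar s$ acts on it by inversion, and the whole group is $D_{2n}$. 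Combined with the previous steps this yields the characterization.

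The main step requiring care is the isomorphism test in the last paragraph: one must verify both the positive direction (exhibiting an explicit identification with $D_{2n}$ when the gcd is $1$) and the negative direction (ruling out isomorphism by an invariant such as the exponent of the rotation subgroup). Everything else is routine bookkeeping with the $(N,\mu)$-parameterization.
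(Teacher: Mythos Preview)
Your proposal is correct and follows essentially the same approach as the paper: enumerate the Lagrangian subcategories via the normal abelian subgroups $\langle r^b\rangle$ (all with trivial $H^2$), use Remark~\ref{about GNB}(1) to identify $G_{(\langle r^b\rangle,1)}$ with a semidirect product $(\mathbb{Z}/(n/b)\mathbb{Z}\times\mathbb{Z}/b\mathbb{Z})\rtimes\mathbb{Z}/2\mathbb{Z}$, and then test for isomorphism with $D_{2n}$ by checking whether the ``rotation'' part contains an element of order~$n$. Your write-up is just a bit more explicit about the intermediate form $\mathbb{Z}/(n/b)\mathbb{Z}\rtimes D_{2b}$ and about the negative direction of the isomorphism test, but the argument is the same.
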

\begin{proof}  
Lagrangian subcategories of $\Z(\Vec_{D_{2n}})$ are all of the form
$\mathcal{L}_{(\langle r^b \rangle ,1)}$, where $b$ divides $n$.
By Remark~\ref{about GNB}  the category  $\mathcal{L}_{(\langle r \rangle,1)}$ is equivalent to
\begin{displaymath}
\Rep((\mathbb{Z}/\frac{n}{b}\mathbb{Z}\times \mathbb{Z}/b\mathbb{Z})\rtimes \GrpCtwo).
\end{displaymath}
The latter category is equivalent to $\Rep(D_{2n})$ if and only if the group $\mathbb{Z}/\frac{n}{b}\mathbb{Z}\times \mathbb{Z}/b\mathbb{Z}$ has an element of order $n$,  which is the case precisely when $\frac{n}{b}$ and $b$ are relatively prime.
\end{proof}

\begin{remark}
Note that $\Aa_0(D_{2n})$ stabilizes {\em all} Lagrangian subcategories in  $\mathbb{L}_0(D_{2n})$
and, hence, it is a normal subgroup of $\Aa(D_{2n})$. 
\end{remark}

\begin{lemma}
\label{image is elementary abelian}
The image of $\Aa(D_{2n})$ in $\Sym(\mathbb{L}_0(D_{2n}))$ is isomorphic to $(\mathbb{Z}/2\mathbb{Z})^k$.
\end{lemma}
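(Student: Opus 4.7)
The plan is to exhibit, for each subset $S$ of the prime divisors of $n$, a class $\bar\alpha_S$ in $\Aa(D_{2n})/\Aa_0(D_{2n})$ of order dividing $2$ sending $\L_{(1,1)}$ to $\L_{(\langle r^{b_S}\rangle,\,1)}$. Since the quotient has exactly $2^k$ elements and the $\bar\alpha_S$ exhaust it, every element will have order at most $2$, forcing the quotient to be elementary abelian of rank $k$.

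By Proposition~\ref{L0 in dihedral case}, divisors $b \mid n$ with $\gcd(b,\,n/b) = 1$ correspond bijectively to subsets $S$ of the set of $k$ primes dividing $n$ via $S \mapsto b_S := \prod_{p \notin S} p^{v_p(n)}$; thus $|\mathbb{L}_0(D_{2n})| = 2^k$. The preceding Remark states that $\Aa_0(D_{2n})$, the stabilizer of $\L_{(1,1)}$, is normal in $\Aa(D_{2n})$. Together with Proposition~\ref{transitive action}, this makes the quotient $\Aa(D_{2n})/\Aa_0(D_{2n})$ act faithfully and regularly on $\mathbb{L}_0(D_{2n})$, so its image in $\Sym(\mathbb{L}_0(D_{2n}))$ has order $2^k$.

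For each $S$, I would construct an invertible $\Vec_{D_{2n}}$-bimodule category $\M(L_S,\,\mu_S)$ using Proposition~\ref{M(L,mu) inv}. Exploiting the Hall decomposition $D_{2n} = \langle r^{b_S}\rangle \rtimes \langle r^{n/b_S},\,s\rangle$ (the two orders are coprime), set
\begin{equation*}
L_S := \bigl(\langle r^{b_S}\rangle \times \langle r^{b_S}\rangle^{\op}\bigr)\cdot \bigl\{(g,\,g^{-1}) : g \in \langle r^{n/b_S},\,s\rangle\bigr\}\ \subset\ D_{2n}\times D_{2n}^{\op}.
\end{equation*}
A direct check confirms that $L_S$ is a subgroup satisfying conditions (i) and (ii) of Proposition~\ref{M(L,mu) inv}, that $L_S^\vee = L_S$ in the notation of Remark~\ref{Mop}, and that $(L_S)_1 = \langle r^{b_S}\rangle \times \{1\}$ and $(L_S)_2 = \{1\} \times \langle r^{b_S}\rangle^{\op}$ are both cyclic of the same order. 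A cocycle $\mu_S \in Z^2(L_S,\,k^\times)$ whose alternating bicharacter is a non-degenerate self-duality pairing on $\langle r^{b_S}\rangle$ exists because these subgroups are cyclic; the resulting $\alpha_S\in\Aa(D_{2n})$ satisfies $\alpha_S(\L_{(1,1)}) = \L_{(\langle r^{b_S}\rangle,\,\mu_S|_{(L_S)_1\times(L_S)_1})}$ by Proposition~\ref{M(L,mu) on L}.

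The involution property is then automatic: because $\langle r^{b_S}\rangle$ is cyclic and $k$ is algebraically closed of characteristic $0$, $H^2(\langle r^{b_S}\rangle,\,k^\times) = 0$, so $\L_{(\langle r^{b_S}\rangle,\,\nu)} = \L_{(\langle r^{b_S}\rangle,\,1)}$ for any cocycle $\nu$. By Remark~\ref{Mop}, the inverse $\alpha_S^{-1}$ is represented by $\M(L_S,\,(\mu_S^\vee)^{-1})$, which by the same cyclicity argument also sends $\L_{(1,1)}$ to $\L_{(\langle r^{b_S}\rangle,\,1)}$. By regularity of the quotient's action, $\bar\alpha_S^{-1} = \bar\alpha_S$, so $\bar\alpha_S^2 = 1$. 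Distinctness of the $\bar\alpha_S$ (they move $\L_{(1,1)}$ to distinct Lagrangians) then exhausts the $2^k$ cosets, and a finite group in which every element has order at most $2$ is elementary abelian, yielding $(\mathbb{Z}/2\mathbb{Z})^k$. The main obstacle is the construction and verification of the cocycle $\mu_S$ in paragraph three; once that is in hand, the involution step falls out cleanly from the vanishing of $H^2$ for cyclic groups.
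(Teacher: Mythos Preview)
Your approach is correct but substantially different from the paper's. The paper gives a short, non-constructive argument: for any fixed $\L_0\in\mathbb{L}_0(D_{2n})$ the numbers $\dim(\L_0\cap\L)$, as $\L$ ranges over $\mathbb{L}_0(D_{2n})$, are pairwise distinct (each $\L$ is determined by a subset of the prime divisors of $n$, and the intersection dimension reads off the overlap of subsets). Since any $\alpha\in\Aa(D_{2n})$ preserves dimensions of intersections, applying $\alpha$ to the pair $(\L,\alpha(\L))$ gives $\dim(\L\cap\alpha(\L))=\dim(\alpha(\L)\cap\alpha^2(\L))$, which by the distinctness forces $\alpha^2(\L)=\L$. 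Thus every element of the image squares to the identity, and since the image has order $2^k$ (for the same reason you give), it is elementary abelian.

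Your route instead produces the $2^k$ elements explicitly as invertible bimodule categories $\M(L_S,\mu_S)$ and shows each has order at most $2$ in the quotient by exploiting $H^2(\langle r^{b_S}\rangle,k^\times)=0$ and regularity. The step you flag as the ``main obstacle'' --- existence of a cocycle $\mu_S$ on $L_S$ with non-degenerate alternating form on $(L_S)_1\times(L_S)_2$ --- is not automatic from cyclicity of those pieces, but it does go through by the paper's own machinery: writing $L_S\cong(\mathbb{Z}/m\mathbb{Z})^2\rtimes D_{2b_S}$ with $m=n/b_S$, coprimality of $m$ and $2b_S$ together with Proposition~\ref{cohomofprimeordergroups} and Theorem~\ref{5termexactseq} shows that restriction $H^2(L_S,k^\times)\to H^2((\mathbb{Z}/m\mathbb{Z})^2,k^\times)^{D_{2b_S}}\cong\mathbb{Z}/m\mathbb{Z}$ is an isomorphism (the $s$-action is by $-1$ on each factor, hence trivial on the bicharacter), exactly as in the $pq$ case of Section~\ref{order pq section}. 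Once you add that computation, your proof is complete.

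The paper's argument is shorter and more conceptual; yours is longer but has the advantage of producing explicit representatives $\M(L_S,\mu_S)$ for each coset --- potentially useful, for instance, in attacking the splitting conjecture stated immediately after the lemma.
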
 
\begin{proof}
Let $\alpha \in \Aa(D_{2n})$  be a braided autoequivalence  such that $\alpha \not\in \St(\Rep(D_{2n}))$.
It suffices to show that the image of $\alpha$ in $\Sym(\mathbb{L}_0(D_{2n}))$ has order $2$. 
For this end, observe that for any fixed $\L_0 \in \mathbb{L}_0(D_{2n})$  all the dimensions
$\dim(\L_0\cap \L),\, \L \in \mathbb{L}_0(D_{2n})$ are distinct (they correspond to subsets of the 
set  of prime divisors of $n$).  Since $\alpha$ preserves dimensions of subcategories,
we have 
\[
\dim(\L \cap \alpha(\L)) = \dim (\alpha(\L) \cap \alpha^2(\L)),
\]
and, hence, $\alpha^2(\L) =\L$ for any $\L \in  \mathbb{L}_0(D_{2n})$.
\end{proof}

\begin{corollary}
There is a short exact sequence
\begin{equation}
\label{D2n sequence}
1 \to (\mathbb{Z}/n\mathbb{Z})^\times /\{ \pm 1\} \to \Aa(D_{2n}) \to  (\mathbb{Z}/2\mathbb{Z})^k \to 1. 
\end{equation} 
\end{corollary}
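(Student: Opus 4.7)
The plan is to read the statement as the orbit-stabilizer exact sequence associated to the action of $\Aa(D_{2n})$ on $\mathbb{L}_0(D_{2n})$. Concretely, I would work with the permutation representation
\[
\pi : \Aa(D_{2n}) \to \Sym(\mathbb{L}_0(D_{2n}))
\]
and identify its kernel with $\Aa_0(D_{2n})$ and its image with $(\mathbb{Z}/2\mathbb{Z})^k$.

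For the kernel, one inclusion is immediate: if $\alpha\in\ker\pi$ then in particular $\alpha$ fixes the canonical subcategory $\Rep(D_{2n})\in\mathbb{L}_0(D_{2n})$, so $\alpha\in\St(\Rep(D_{2n}))=\Aa_0(D_{2n})$. The reverse inclusion is the content of the remark preceding Lemma~\ref{image is elementary abelian}: because $H^2(D_{2n},k^\times)=0$ for odd $n$, every element of $\Aa_0(D_{2n})$ is of the form $\Delta_{(a,1)}$, and by Proposition~\ref{action of A0G on L} such a braided autoequivalence acts on $\L_{(\langle r^b\rangle,1)}$ by $a$, which maps $\langle r^b\rangle$ to itself (since $a$ is represented by some $r\mapsto r^i$ with $\gcd(i,n)=1$ and any subgroup of the cyclic group $\langle r\rangle$ is characteristic).

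For the image, I would simply invoke Lemma~\ref{image is elementary abelian}, which was proved just above using the distinct-intersection-dimensions trick. Combined with the corollary identifying $\Aa_0(D_{2n})=\St(\Rep(D_{2n}))$ with $(\mathbb{Z}/n\mathbb{Z})^\times/\{\pm 1\}$ (from \eqref{A)G} together with $H^2(D_{2n},k^\times)=0$), the first isomorphism theorem applied to $\pi$ produces the desired sequence
\[
1 \to (\mathbb{Z}/n\mathbb{Z})^\times/\{\pm 1\} \to \Aa(D_{2n}) \to (\mathbb{Z}/2\mathbb{Z})^k \to 1.
\]

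Once Lemma~\ref{image is elementary abelian} is available, the corollary itself is essentially formal. The only conceptual point worth double-checking is that $\Aa_0(D_{2n})$ lies in $\ker\pi$ (not merely that it stabilizes $\Rep(D_{2n})$); this is where one must use both the vanishing of $H^2(D_{2n},k^\times)$ and the characteristic nature of each $\langle r^b\rangle$ in $\langle r\rangle$, so that the outer automorphism action described by Proposition~\ref{action of A0G on L} trivially permutes $\mathbb{L}_0(D_{2n})$.
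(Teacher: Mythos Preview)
Your proposal is correct and follows exactly the paper's approach: the corollary is meant to be read off from the permutation representation $\pi:\Aa(D_{2n})\to\Sym(\mathbb{L}_0(D_{2n}))$, with the kernel identified as $\Aa_0(D_{2n})\cong(\mathbb{Z}/n\mathbb{Z})^\times/\{\pm 1\}$ via the preceding Remark (that $\Aa_0(D_{2n})$ fixes every element of $\mathbb{L}_0(D_{2n})$) and the image given by Lemma~\ref{image is elementary abelian}. The paper leaves the corollary without an explicit proof precisely because these two ingredients, together with the earlier computation of $\St(\Rep(D_{2n}))$, make it immediate; your write-up simply spells out that first-isomorphism-theorem step.
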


\begin{remark}
When $k=1$, i.e., when $n$ is a prime power, an argument similar to that in Section~\ref{order pq section}
shows that $\Aa(D_{2n}) \cong (\mathbb{Z}/n\mathbb{Z})^\times /\{ \pm 1\} \rtimes \mathbb{Z}/2\mathbb{Z}$,
i.e.,  $\Aa(D_{2n})$ is a generalized dihedral group. We conjecture that in general the sequence  \eqref{D2n sequence}
splits, i.e., 
\[
\Aa(D_{2n}) \cong (\mathbb{Z}/n\mathbb{Z})^\times /\{ \pm 1\} \rtimes (\mathbb{Z}/2\mathbb{Z})^k.
\]
\end{remark}

\bibliographystyle{ams-alpha}

\end{document}